\theoremstyle{plain}
\newtheorem{assumption}{Assumption}
\newcommand{\ligne}{\vspace{1\baselineskip}}
\newcommand{\ph}{\phantomsection}
\newcommand{\R}{  \mathbb{R}   }
\newcommand{\X}{  \mathcal{X}  }
\newcommand{\eps}{\varepsilon}
\newcommand{\e}{  \text{e}   }
\newcommand{\N}{  \mathbb{N}   }
\newcommand{\K}{\mathcal{K}}
\newcommand{\dis}{\displaystyle}
\newcommand{\h}{  h }
\renewcommand{\a}{  \alpha   }
\renewcommand{\phi}{  \varphi  }
\renewcommand{\>}{  \rangle   }
\renewcommand{\l}{  \ell  }
\newcommand{\dist}{\operatorname{dist}}
\numberwithin{equation}{section}
 \author{ Beno\^it Gr\'ebert}
\address{Laboratoire de Math\'ematiques J. Leray, Universit\'e de Nantes, UMR CNRS 6629\\
2, rue de la Houssini\`ere \\
44322 Nantes Cedex 03, France.}
\email{benoit.grebert@univ-nantes.fr}
 \author{Tiphaine J\'ez\'equel} 
\address{  IRMAR, ENS Cachan Bretagne, CNRS, UEB
Avenue Robert Schuman, 35170 Bruz, France.}
\email{tiphaine.jezequel@inria.fr}  
 \author{Laurent Thomann}
\address{Laboratoire de Math\'ematiques J. Leray, Universit\'e de Nantes, UMR CNRS 6629\\
2, rue de la Houssini\`ere \\
44322 Nantes Cedex 03, France.}
\email {laurent.thomann@univ-nantes.fr}
\title[Stability  of Klein-Gordon]
{Stability of large periodic solutions of Klein-Gordon near a homoclinic orbit}
\begin{document}
\frontmatter

\begin{abstract}
We consider the   Klein-Gordon equation  (KG) on a Riemannian surface $M$
$$  \partial^{2}_t u-\Delta u-m^{2}u+u^{2p+1} =0,\quad p\in \N^{*},\quad  (t,x)\in \R\times M,$$
which  is globally well-posed in the energy space. Viewed as a first order Hamiltonian system  in  the variables $(u, v\equiv \partial_t u)$, the associated flow  lets invariant the two dimensional space of $(u,v)$ independent of $x$. It turns out that in this  invariant space,  there is a homoclinic orbit to the origin, and a family of  periodic solutions inside the loops of the homoclinic orbit. In this paper we study the stability of these periodic orbits under the (KG) flow, i.e. when turning on the nonlinear interaction with the non stationary modes. By a shadowing method, we prove that around the periodic orbits, solutions stay close to them during a time of order $(\ln{\eta})^2$, where $\eta$ is the distance between the periodic orbit considered and the homoclinic orbit.
    \end{abstract}

\keywords{Klein-Gordon equation, wave equation, homoclinic orbit.}
\altkeywords{  Equation de Klein-Gordon, \'equation des ondes, orbite homocline.}\frontmatter
\subjclass{ 37K45, 35Q55, 35Bxx}
\thanks{
\noindent B.G. was supported in part by the  grant ANR-10-BLAN-DynPDE.\\
 B.G. and L.T.  were supported in part by the  grant  ``HANDDY'' ANR-10-JCJC 0109 \\
 and by the  grant  ``ANA\'E'' ANR-13-BS01-0010-03}

\maketitle

\section{Introduction, statement of the main results}
\subsection{General introduction} Denote by  $M$   a compact Riemannian manifold without boundary of dimension 1,2 or 3 and denote by $\Delta=\Delta_{M}$ the Laplace-Beltrami operator on $M$. In this paper we are concerned with the following nonlinear Klein-Gordon  (KG) equation
\begin{equation}\label{kg} \tag{KG}
\left\{
\begin{aligned}
& \partial^{2}_t u-\Delta u-m^{2}u+u^{2p+1} =0,\quad 
(t,x)\in\R\times M,\\
&u(0,x)= u_{0}(x),\quad \partial_{t}u(0,x)= u_{1}(x),
\end{aligned}
\right.
\end{equation} 
where $p\geq 1$ is an integer, and $(u_{0},u_{1})\in H^{1}(M)\times L^{2}(M)$ are real-valued.\ligne

Let us recall  that there exists a Hilbert basis of $L^{2}(M)$ composed with eigenfunctions $(e_{n})_{n\geq 0}$ of~$\Delta$. Moreover,  there exists a sequence $0=\lambda_{0}<\lambda_{1}\leq \dots \leq \lambda_{n}\leq \dots$ so that  
 \begin{equation*}
 -\Delta e_{n}=\lambda^{2}_{n}e_{n}, \quad n\geq 0.
 \end{equation*}

We make following  assumptions 
\begin{assumption}\ph\label{assumption0}
The parameter  $m$ satisfies $\dis 0<m<\lambda_{1}.$
\end{assumption}

\begin{assumption}\ph\label{assumption}
The manifold $M$ and the integer $p$ satisfy either:\\[5pt]
\indent $\bullet$ $M$ is any compact manifold without boundary of dimension 1 or 2 and $p\geq 1$\\[5pt]
\indent $\bullet$ $M$ is any compact manifold without boundary of dimension 3 and $p= 1$.\\[5pt]
Moreover, up to a rescaling, we can assume that ${\rm Vol}\, M=1$.
\end{assumption}

Let us recall the well posedness result  proved in \cite{GJT} 

\begin{prop}[\cite{GJT}, Theorem 2.2]
Under Assumption \eqref{assumption0}, the equation \eqref{kg} is globally well posed in the energy space $H^1(M)\times L^2(M)$.
\end{prop}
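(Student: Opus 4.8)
The plan is to combine a local well-posedness statement --- obtained by a contraction argument in $C\big([0,T];H^1(M)\times L^2(M)\big)$ with $T>0$ depending only on the size of the data --- with an a priori bound coming from the conservation of the energy
\[
E(u,\partial_t u)=\frac12\int_M(\partial_t u)^2+\frac12\int_M|\nabla u|^2-\frac{m^2}{2}\int_M u^2+\frac{1}{2p+2}\int_M u^{2p+2},
\]
which the equation \eqref{kg} formally preserves.

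\emph{Local theory.} The linear Klein--Gordon flow $S(t)$ associated with $\partial_t^2-\Delta-m^2$ preserves $H^1(M)\times L^2(M)$, with operator norm bounded on every bounded time interval: on the unstable zero mode it only contributes the factors $\cosh(tm)$ and $m^{-1}\sinh(tm)$, and on the modes $n\ge1$ it is a bounded oscillation since $\lambda_n^2-m^2>0$ by Assumption \eqref{assumption0}. Writing \eqref{kg} in Duhamel form,
\[
(u,\partial_t u)(t)=S(t)(u_0,u_1)-\int_0^t S(t-s)\big(0,\,u^{2p+1}(s)\big)\,ds,
\]
one is reduced to estimating $u\mapsto u^{2p+1}$ from $H^1(M)$ to $L^2(M)$. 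This map is locally Lipschitz on bounded sets: in dimension $1$, $H^1(M)$ is an algebra; in dimension $2$, $H^1(M)\hookrightarrow L^q(M)$ for every finite $q$, so H\"older's inequality gives $\|u^{2p+1}-w^{2p+1}\|_{L^2}\lesssim(\|u\|_{H^1}^{2p}+\|w\|_{H^1}^{2p})\|u-w\|_{H^1}$; in dimension $3$ one has $p=1$ and uses the critical Sobolev embedding $H^1(M)\hookrightarrow L^6(M)$, which gives $\|u^3-w^3\|_{L^2}\lesssim(\|u\|_{H^1}^{2}+\|w\|_{H^1}^{2})\|u-w\|_{H^1}$. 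A standard fixed point on a ball of $C\big([0,T];H^1\times L^2\big)$ then provides, for each $R>0$, a time $T=T(R)>0$ and, for every datum of norm $\le R$, a unique solution on $[-T,T]$ depending continuously on $(u_0,u_1)$. (Alternatively one may invoke the Strichartz estimates of Burq--G\'erard--Tzvetkov on compact manifolds, but the Sobolev embeddings suffice in this range.) Along this solution $E$ is constant.

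\emph{A priori bound and globalization.} Since the local time $T(R)$ depends only on $\|(u_0,u_1)\|_{H^1\times L^2}$, it suffices to control this norm by $E(u_0,u_1)$. Split $u=\bar u+u^{\perp}$ with $\bar u=\int_M u$ (recall $\mathrm{Vol}\,M=1$) and $\int_M u^{\perp}=0$, so that $\int_M u^2=\bar u^2+\|u^{\perp}\|_{L^2}^2$. Assumption \eqref{assumption0} together with $\|\nabla u\|_{L^2}^2=\|\nabla u^{\perp}\|_{L^2}^2\ge\lambda_1^2\|u^{\perp}\|_{L^2}^2$ gives $\tfrac12\|\nabla u\|_{L^2}^2-\tfrac{m^2}{2}\|u^{\perp}\|_{L^2}^2\ge c\,\|u^{\perp}\|_{H^1}^2$ with $c=c(m,\lambda_1)>0$, while Jensen's inequality gives $\int_M u^{2p+2}\ge|\bar u|^{2p+2}$. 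Hence
\[
E(u_0,u_1)=E(u,\partial_t u)\ \ge\ \tfrac12\|\partial_t u\|_{L^2}^2+c\,\|u^{\perp}\|_{H^1}^2+\Big(\tfrac{1}{2p+2}|\bar u|^{2p+2}-\tfrac{m^2}{2}\bar u^2\Big),
\]
and since $2p+2>2$ the last parenthesis tends to $+\infty$ as $|\bar u|\to\infty$; therefore $|\bar u|$, and then $\|u\|_{H^1}+\|\partial_t u\|_{L^2}$, is bounded in terms of $E(u_0,u_1)$ alone. The usual continuation argument now extends the solution to all times.

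\emph{Main point.} The only non-routine feature is that, under Assumption \eqref{assumption0}, the energy is \emph{not} coercive: the quadratic form $\tfrac12\|\nabla u\|_{L^2}^2-\tfrac{m^2}{2}\|u\|_{L^2}^2$ is negative on the zero mode --- which is exactly what produces the nontrivial $x$-independent dynamics studied in this paper. The remedy is the defocusing sign of the nonlinearity: it is the super-quadratic term $\int_M u^{2p+2}\gtrsim|\bar u|^{2p+2}$ that tames the unstable zero mode in the a priori bound. (In dimension $3$ one should additionally keep in mind the borderline nature of $H^1\hookrightarrow L^6$ for the cubic nonlinearity, which is why $p=1$ is imposed there.)
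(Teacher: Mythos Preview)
The paper does not prove this proposition; it simply quotes it from \cite{GJT}, Theorem~2.2. So there is nothing in the present paper to compare against.

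Your argument is the standard one and is correct: a contraction in $C\big([0,T];H^1\times L^2\big)$ using that $u\mapsto u^{2p+1}$ is locally Lipschitz from $H^1(M)$ to $L^2(M)$ via Sobolev embeddings, combined with an a priori bound from energy conservation. Your treatment of the a priori bound is exactly the right one for this problem: the quadratic part of $E$ is indefinite on the zero mode (this is the whole point of Assumption~\ref{assumption0}), and it is the defocusing term $\int_M u^{2p+2}\ge|\bar u|^{2p+2}$ that closes the estimate on $\bar u$, after which Poincar\'e on the mean-zero part controls $u^\perp$.

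One minor remark: in dimension $3$ your local theory invokes $p=1$, which is Assumption~\ref{assumption}, not Assumption~\ref{assumption0}. The proposition as stated here only mentions Assumption~\ref{assumption0}, but the restriction on $(M,p)$ is a standing hypothesis of the paper (and of \cite{GJT}) and is precisely what keeps the problem energy-subcritical; your use of it is appropriate.
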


Under Assumptions \ref{assumption0} and \ref{assumption}, the stationary solutions of (KG)  (solutions which only depend on the space variable)  are exactly the constants $u=0$, $u=m^{1/p}$ and $u=-m^{1/p}$. The origin is an equilibrium with an unstable direction. In fact, the eigenvalues of $-\Delta-m^{2}$ are the $(\lambda_{k}^{2}-m^{2})_{k\in \N}$. Since~$0<m<\lambda_{1}$,  the case $k=0$ only,  gives the hyperbolic directions, corresponding to the solution $\exp(mt)$ for $t>0$ (resp. $\exp(-mt)$ for $t<0$). It turns out that \eqref{kg} admits a homoclinic orbit  to the origin which is independent of $x$. In the previous work \cite{GJT} we have proved that (KG) admits a  family  of heteroclinic connections to the center manifold which are close to this homoclinic orbit. The expected picture is  that we have a tube of heteroclinic connections surrounding the homoclinic orbit (but the statement in \cite{GJT} is not so precise, only a large family of heteroclinic orbits is constructed).\ligne

As we pointed out in \cite{GJT}, the dynamics around the elliptic points $u=\pm m^{1/p}$ can be partially described by the KAM theory or the Birkhoff normal form theory.  The KAM theory gives the existence of a large family of finite dimensional invariant tori close to the equilibrium (see \cite{Wayne, Posch,kuk} or the book~\cite{kuk}). The Birkhoff normal form approach gives the stability during polynomial times for any initial condition close to the equilibrium  (see \cite{BamGre}).\ligne

 In this work our point of view is different: (KG) also admits large periodic orbits inside the loops of the homoclinic orbit (see Figure 1) and we are interested  in the stability of these large  periodic orbits. Observe that they are not close to the origin since they turn around $u=m^{1/p}$.  By a shadowing method, we prove that around the periodic orbits, solutions stay close to them during a time of order $(\ln{\eta})^2$, where $\eta$ is the distance between the periodic orbit considered and the homoclinic orbit. Actually, thanks to an energy method, it is easy to get a control for times of order $\ln \eta$, which is the typical timescale in the presence of a hyperbolic point. In our context, $\ln \eta$ is the timescale needed to achieve one loop, and our contribution consists in proving that we can follow the solution for $\ln \eta $ loops. The two main ingredients used in the proof are 
 \begin{itemize}
   \item[$\bullet$] The Hamiltonian is negative on the trajectory, which implies a confinement of the solution;
 \item[$\bullet$] The trajectory is close to the homoclinic orbit, which gives the pattern of the solution.
 \end{itemize}
Combining these two  facts, we  conclude with a bootstrap argument.

\subsection{Hamiltonian structure of \eqref{kg}} 
 As in \cite{GJT}, we define the scalar product on $L^{2}(M)$   by $\dis\<f,g\>=\frac1{{\rm Vol}\, M}\int_{M}fg$, where ${\rm Vol}\, M$ denotes  the volume of $M$, we assume that $\|e_{n}\|_{L^{2}}=1$ and we set $e_{0}=1$.   \ligne
 
Denote by $v=\partial_{t}u$ and introduce
\begin{equation}\label{Hami0}
H=\frac12\int_{M}\Big(|\nabla_{x}u|^{2}+v^{2}-m^{2}u^{2}\Big)+\frac1{2p+2}\int_{M}u^{2p+2}.
\end{equation}
Then, the system \eqref{kg} is equivalent to 
\begin{equation}\label{ham}
\dot{u}=\frac{\delta H}{\delta v},\quad \dot{v}=-\frac{\delta H}{\delta u}.
\end{equation}
We write 
\begin{equation*}
u(t,x)=\sum_{n=0}^{+\infty}a_{n}(t)e_{n}(x),\quad  v(t,x)=\sum_{n=0}^{+\infty}b_{n}(t)e_{n}(x),
\end{equation*}
where 
\begin{align*}
(a_n)_{n\in\N}\in  h^1(\N,\R)&:=\big\{ x=(x_n)_{n\in\N} \mid \|x\|^{2}_{h^1}=\sum_{n\in\N}(1+\lambda_{n}^2)|x_n|^2 <+\infty \big\},\\
(b_n)_{n\in\N}\in  \l^2(\N,\R)&:=\big\{ x=(x_n)_{n\in\N} \mid \|x\|^{2}_{\l^2}=\sum_{n\in\N}|x_n|^2 <+\infty \big\},
\end{align*}
in such a way that to the continuous phase space $\X:=H^{1}\times L^{2}$ corresponds the discrete one  $h^1\times \l^2$. We endow this space with the natural norm and distance
\begin{align*}
\|X\|_\X&=\|u\|_{h^{1}}+\|v\|_{\ell^{2}},\quad \text{for}\quad  X=(u,v)\\
\dist_\X (X,Y)&= \|X-Y\|_\X, \quad \text{for}\quad  X,Y \in \X.
\end{align*}
In the coordinates $(a_{n},b_{n})_{n\geq0}$, the Hamiltonian in \eqref{Hami0} reads 
\begin{equation}\label{Hami}
H=\frac12\sum_{n=0}^{+\infty}\Big[(\lambda_{n}^{2}-m^{2})a^{2}_{n}+b^{2}_{n}\Big]+\frac1{2p+2}\int_{M}\Big(\sum_{k=0}^{+\infty}a_{k}e_{k}(x)\Big)^{2p+2}\text{d}x,
\end{equation}
and the system \eqref{ham} becomes
 \begin{equation}\label{syst} 
\left\{
\begin{aligned}
&\dot{a}_{n} =b_{n},\quad n\geq 0\\
&\dot{b}_{n} =-(\lambda_{n}^{2}-m^{2})a_{n}-\int_{M}\Big(\sum_{k=0}^{+\infty}a_{k}e_{k}(x)\Big)^{2p+1}e_{n}(x)\text{d}x,\quad n\geq 0.
\end{aligned}
\right.
\end{equation}

\subsection{Space-stationary solutions: homoclinic orbit and family of periodic orbits}\label{Sect.14}
The space-stationary solutions of \eqref{kg} exactly correspond to the  solutions of \eqref{syst} satisfying $a_{n}=b_{n}=0$ for $n\geq 1$. In this case, the equation on $(a_{0},b_{0})$ reads 
 \begin{equation} \label{toy0}
\left\{
\begin{aligned}
&\dot{a}_{0} =b_{0}\\
&\dot{b}_{0} =m^{2}a_{0}- a^{2p+1}_{0}, 
\end{aligned}
\right.
\end{equation} 
and this system possesses a homoclinic solution to 0, which  we will denote in the sequel by $\h: t\mapsto (\alpha(t),\beta(t))$. We  denote by $\mathcal{K}_{0}\subset h^1\times\ell^2$ the curve which is described by $a_0(t)=h(t),\ b_0(t)=h'(t)$ and $a_n(t)=b_n(t)=0$ for $n\geq 1$ (see Figure \ref{PhasePortraita0b0}). 
\begin{figure}[h!]
\centering 
\def\svgwidth{100mm}
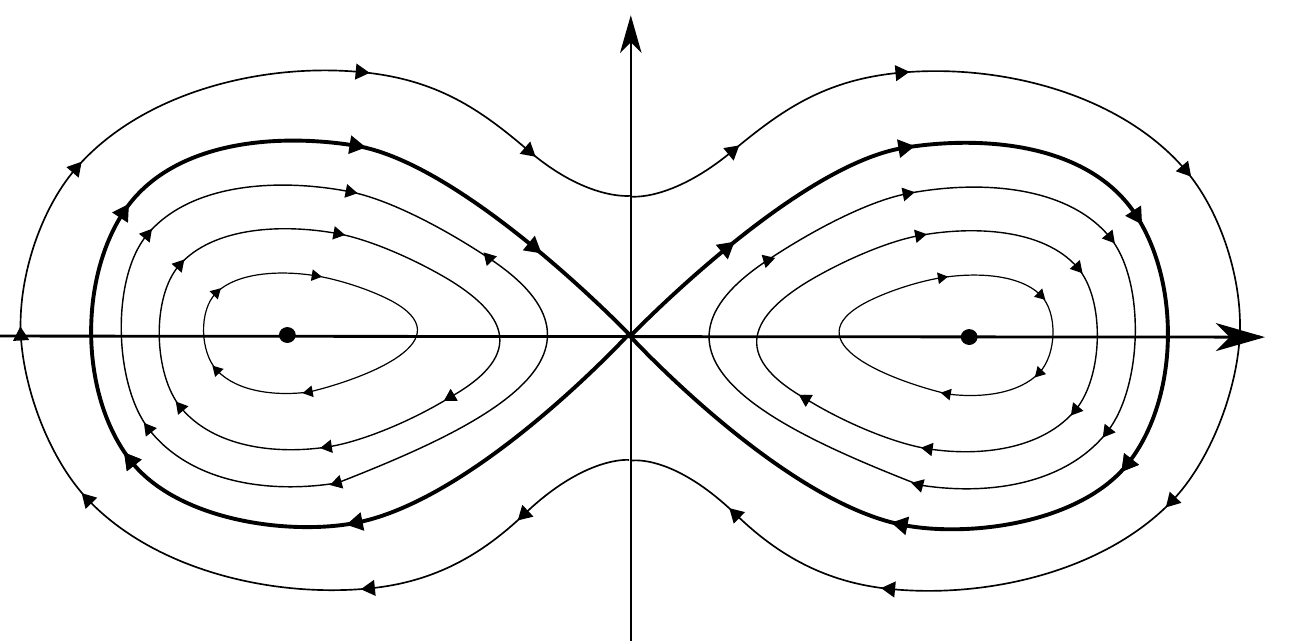
\caption{Phase portrait for the space-stationary set $a_{n}=b_{n}=0$ for $n\geq 1$.}
\label{PhasePortraita0b0}
\end{figure} 
Indeed we can explicitly compute 
\begin{equation}\label{def.homo}
h(t)=\frac{m^{1/p}(p+1)^{1/(2p)}}{\big(\cosh(pmt)\big)^{1/p}},\quad h'(t)=-m^{1/p+1}(p+1)^{1/(2p)}\frac{\sinh(pmt)}{\big(\cosh(pmt)\big)^{1/p+1}}.
\end{equation}

For $\eta>0$ denote by $\mathcal{K}_{\eta}$ the trajectory of \eqref{syst} given by the  initial conditions $a_{0}(0)=\eta$, $b_{0}(0)=0$, $a_n(0)=b_n(0)=0$ for $n\geq 1$ (see Figure \ref{PhasePortraita0b0}). This orbit remains in the plane $\{ (a,b)\in h^1\times \ell^2 \mid a_n=b_n=0\ \text{for}\ n\geq 1\}$ and is periodic with a  period  of order $\dis \ln \frac1{\eta}$ (see below the proof): most of that time is dedicated to cover the very principle and the very end of the loop, {\it i.e.} when $(a_0,b_0)$ is close to $(\eta,0)$. \ligne
  
Our main result states that for $\eta$ small enough these periodic solutions are stable during a long time of order $(\ln \eta)^2$ in the sense that if one starts $\eta^3$-close to $\mathcal{K}_{\eta}$ then one remains $\eta^{2}$-close to $\mathcal{K}_{\eta}$ during a time of order $(\ln \eta)^2$. More precisely 
  \begin{theo}\ph\label{thm4}
 Let $M$, $p$ and $m$ fixed and satisfying Assumptions \ref{assumption0} and \ref{assumption}. There exists  $\eta_0>0$, $0<c<1$ and $C>0$ such that if $0<\eta<\eta_0$ and if the initial datum $(a(0),b(0))$ satisfies  $$\dist_\X\Big((a(0), b(0)), \K_\eta\Big)\leq \eta^3$$  then the solution of \eqref{syst} satisfies
  \begin{enumerate}[(i)]
  \item  There exists $T_\eta>0$ satisfying  $c\ln \frac1{\eta} \leq T_\eta\leq \frac1c\ln \frac1{\eta}$ such that
$$\dist_\X\Big((a(T_\eta),b(T_\eta)),(a(0), b(0))\Big)\leq C\eta^{2}$$
  \item  For all $|t|\leq c \big(\ln\frac1{\eta}\big)^{2}$
  \begin{equation*}
 \dist_\X\Big(\big(a_{0}(t),b_{0}(t)\big), \mathcal{K}_{\eta}\Big)\leq C\eta^{2}.
 \end{equation*}
  \end{enumerate}
  \end{theo}

 This result shows that a trajectory issued from a tube of thickness $\eta^3$ centered on $\K_\eta$   describes in the mode 0 several loops around $(m^{\frac{1}{p}},0)$ and remains very small in the other modes. Indeed the time~$T_\eta$ is  the time necessary for $(a_0,b_0)$ to complete one loop around $(m^{\frac{1}{p}},0)$, and the previous result shows  the trajectory completes at least $c^{2}\ln\frac1{\eta}$ loops around $(m^{\frac{1}{p}},0)$, staying inside the tube of thickness~$\eta^{2}$ centered on $\K_\eta$.  
  
\begin{rema} Numerical simulations confirm this stability result and seem to show that it holds true for larger timescales. Moreover, it is likely that there exist initial conditions as in Theorem \ref{thm4} and which satisfy  $(ii)$ for all times. Such a result is out of reach with our method and could possibly be attacked with a KAM method.
 \end{rema}
 
 \subsection{On the linear case.} This stability problem is related to the more general question of existence of periodic or quasi-periodic solutions to (KG). This question is not evident even in the linear case. Actually, we can consider the linearized equation of (KG) around the periodic orbit $\K_\eta$ as a first approximation to understand the possible dynamics: take $a_0(t)$ as a periodic solution of the system~\eqref{toy0} inside the loops (see Figure \ref{PhasePortraita0b0}) and search for $w=\sum_{k\geq 1}a_k e_k$ as a (small) solution of the following linear wave equation
  \begin{equation}\label{lkg} 
\partial^{2}_t w-\Delta w-m^{2}w+V(t)w =0,\quad 
(t,x)\in\R\times M
\end{equation} 
where $t\mapsto V(t)=(2p+1) a^{2p}_0(t)$ is a time depending potential. In the discrete variables, the equation~\eqref{lkg} reads
\begin{equation} \label{toy2}
\left\{
\begin{aligned}
&\dot{a}_{n} =b_{n}\\
&\dot{b}_{n} =-(\lambda_{n}^2-m^{2})a_{n}-V(t)a_n
\end{aligned}
\quad \quad \text{for } n\geq 1. \right.
\end{equation} 
  The existence of small periodic (or quasi-periodic) solutions of \eqref{lkg} is related to the reducibility of this equation to the autonomous case and to the Floquet theory. In the case where $V$ is a small potential the question can be considered with the help of the KAM theory (see \cite{EK2} in the case of a linear Schr\"odinger equation on a torus or \cite{GT} in the case of a linear Schr\"odinger equation on the line with a harmonic potential). To the best of our knowledge there is no result in a case where the time depending linear perturbation is not small. With the same techniques, we can apply our result  to the linear equation \eqref{toy0},\eqref{toy2}, and this shows that there is at least  some stability around the  solutions~$\K_\eta$. 
 
\begin{rema}
 In our previous paper \cite{GJT}, we also consider the linearised system \eqref{toy2} but in the case where $t\mapsto a_{0}(t)$ describes the homoclinic orbit, and in which case the linear flow is bounded. We refer to \cite[Appendix]{GJT} where we crucially use that $\int_{0}^{+\infty}a_{0}(t)\text{d}t<+\infty$.
 
 \end{rema}

  \begin{enonce*}{Notations}
 In this paper $c,C>0$ denote constants the value of which may change
from line to line. These constants will always be universal, or depend on the fixed quantities $m$ and $p$.\\
We denote by $\N$ the set of the non negative integers, and $\N^{*}=\N\backslash\{0\}$. We set $\X=H^{1}(M)\times L^{2}(M)$.
 \end{enonce*}

\section{Proof of the main result}\label{Sect.2}

\subsection{Notations and strategy of the proof}

For $\dis u=\sum_{k=0}^{+\infty}a_{k}e_{k}$ we define  
\begin{equation}\label{defU}
 U:= \sum_{k=1}^{+\infty}a_{k}e_{k}, 
\end{equation} 
so that in particular, $u$ reads $u=a_0e_0+U$.
 
In the sequel we will use the following decomposition of the energy   $H$ (see \eqref{Hami})  
\begin{equation}\label{HamiJ}
H=\frac12(b^{2}_{0}-m^{2}a^{2}_{0})+J+\frac1{2(p+1)}\int_{M}\big(a_{0}+U\big)^{2p+2}\text{d}x=\frac12 b_0^2+\frac12 f(a_0)+J+r,
\end{equation}
 where 
 \begin{equation}\label{def.f}
f(x):=-m^{2}x^{2}+\frac1{p+1}x^{2p+2},
\end{equation}
\begin{equation}\label{defJ}
 J:=\frac12\sum_{k=1}^{+\infty}\Big[(\lambda_{k}^{2}-m^{2})a^{2}_{k}+b^{2}_{k}\Big],
\end{equation}
and 
\begin{equation}\label{def:r}
r(t)=\frac{1}{2(p+1)}\int_{M}\big((a_0+U)^{2p+2}-a_0^{2p+2}\big)\text{d}x.
\end{equation}
Note that $J\sim \|U\|^{2}_{\X}$. In Subsection \ref{sect24}, we  prove that $J$ remains small. As a consequence $r$ is also small since it is a quadratic quantity in $U$. Then, as long as $J$ is small, the dynamics of the solution is governed by the space stationary dynamics (see Subsection \ref{Sect.14}) given by the Hamiltonian $H_{0}=\frac12b^{2}_{0}+f(a_{0})$.\ligne

Moreover, we introduce the notations $q_{k}$ for $k\in\N$, defined by 
\begin{equation}\label{def.pk}
q_{k}=\int_{M}\Big((a_{0}+U)^{2p+1}-a^{2p+1}_{0}-(2p+1)a^{2p}_{0}U\Big)e_{k}\text{d}x,
\end{equation}
so that the systems \eqref{syst} read
\begin{equation}\label{syst0} 
\left\{
\begin{aligned}
&\dot{a}_{0} =b_{0}\\
&\dot{b}_{0} =-f'(a_0)-q_0,
\end{aligned}
\right.
\end{equation} 
\begin{equation}\label{systn} 
\left\{
\begin{aligned}
&\dot{a}_{k} =b_{k},\quad k\geq 1\\
&\dot{b}_{k} =-(\lambda_{k}^{2}-m^{2})a_{k}-(2p+1)a^{2p}_{0}a_{k}-q_{k},\quad k\geq 1.
\end{aligned}
\right.
\end{equation} 

We finally fix a real $\delta\in]0,m^{\frac1p}[$, and denote by $\delta'$ the unique real of $]m^{\frac1p},(p+1)^{\frac1{2p}}m^{\frac1p}[$ such that $f(\delta')=f(\delta)$.

\subsection{Preliminary results}
In the sequel we will need
\begin{lemm}\ph\label{lemsobo}
Let $U\in H^{1}(M)$. Then  \\[5pt]
$\bullet$ When $M$ has dimension 1 or 2, for all $2\leq q<+\infty$
\begin{equation*}
\|U\|_{L^{q}(M)}\leq C_{q} J^{1/2}.
\end{equation*}
$\bullet$ When $M$ has dimension 3, for all $2\leq q\leq 6$
\begin{equation*}
\|U\|_{L^{q}(M)}\leq C_{q} J^{1/2}.
\end{equation*}
 \end{lemm}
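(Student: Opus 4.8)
The statement to prove is Lemma \ref{lemsobo}: the Sobolev-type estimate $\|U\|_{L^q(M)} \le C_q J^{1/2}$, where $U = \sum_{k\ge 1} a_k e_k$ and $J = \frac12 \sum_{k\ge 1} [(\lambda_k^2 - m^2) a_k^2 + b_k^2]$.

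The plan is straightforward. First I would observe that $J$ controls the $H^1$ norm of $U$ restricted to the modes $k \ge 1$. Indeed, since $0 < m < \lambda_1 \le \lambda_k$ for all $k \ge 1$, there is a spectral gap: $\lambda_k^2 - m^2 \ge \lambda_k^2 (1 - m^2/\lambda_1^2) = c_0 \lambda_k^2$ with $c_0 = 1 - m^2/\lambda_1^2 \in (0,1)$. Hence
\begin{equation*}
\sum_{k\ge 1}(1+\lambda_k^2)|a_k|^2 \le \sum_{k\ge 1}\Big(1 + \tfrac{1}{c_0}(\lambda_k^2 - m^2)\Big)|a_k|^2 \le C \sum_{k\ge 1}(\lambda_k^2 - m^2)|a_k|^2 \le C J,
\end{equation*}
where I also used $\lambda_k^2 - m^2 \ge \lambda_1^2 - m^2 > 0$ to absorb the constant term $1$. (Alternatively, $(1+\lambda_k^2) \le \frac{1+\lambda_1^2}{\lambda_1^2-m^2}(\lambda_k^2-m^2)$ for $k \ge 1$.) This gives $\|U\|_{H^1(M)}^2 = \sum_{k\ge1}(1+\lambda_k^2)|a_k|^2 \le C J$.

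Then I would simply invoke the Sobolev embedding theorem on the compact manifold $M$ (without boundary). When $\dim M = 1$ or $2$, $H^1(M) \hookrightarrow L^q(M)$ continuously for every $q \in [2,\infty)$; when $\dim M = 3$, $H^1(M) \hookrightarrow L^q(M)$ for every $q \in [2,6]$, the endpoint $6 = 2\cdot 3/(3-2)$ being the critical exponent. Composing the embedding constant with the inequality $\|U\|_{H^1} \le C J^{1/2}$ from the previous step yields $\|U\|_{L^q(M)} \le C_q \|U\|_{H^1(M)} \le C_q J^{1/2}$, which is exactly the claim. The restriction on $\dim M$ and $p$ in Assumption \ref{assumption} is precisely what makes the relevant power $u^{2p+2}$ integrable via this embedding later on, but for the lemma itself only the dimension enters.

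There is no real obstacle here; the only point requiring a line of care is the spectral-gap step, i.e. using Assumption \ref{assumption0} ($m < \lambda_1$) to convert the indefinite-looking quadratic form $\sum_k (\lambda_k^2 - m^2)a_k^2$ — which is genuinely negative on the $k=0$ mode — into something comparable to the full $h^1$ norm once the $k=0$ mode has been removed. The fact that $U$ by definition excludes the mode $e_0 = 1$ is essential; the same estimate is false for the full $u$.
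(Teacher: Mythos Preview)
Your proof is correct and follows the same approach as the paper, which simply invokes the Sobolev embedding $\|U\|_{L^q(M)} \le C_q \|U\|_{H^1(M)}$ and says ``the result follows.'' You are actually more explicit than the paper in spelling out the spectral-gap step (using Assumption~\ref{assumption0} to get $\|U\|_{H^1}^2 \le C J$ once the $k=0$ mode is excluded), which the paper leaves entirely to the reader.
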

 
\begin{proof}
By Sobolev, in each of the previous cases, there exists $C_{q}>0$  so that for all $U\in H^{1}(M)$ we have $\|U\|_{L^{q}(M)}\leq C_{q} \|U\|_{H^{1}(M)}$ and the result follows.
\end{proof}
\begin{lemm}\ph\label{lem:energy}
Let $(a,b)\in \X$ such that  $\dist_\X\big( (a,b),\K_\eta\big)\leq \eta^3$ then
\begin{equation}\label{heta} H(a,b)=-\frac 1 2 m^2 \eta^2 +O(\eta^3).\end{equation}
\end{lemm}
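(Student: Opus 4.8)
The plan is to use the Hamiltonian decomposition \eqref{HamiJ} together with the hypothesis that $(a,b)$ is $\eta^3$-close to the periodic orbit $\K_\eta$. The key observation is that $H$ is conserved by the flow, so it suffices to evaluate $H$ at the single point of $\K_\eta$ which is easiest, namely the turning point $(a_0,b_0)=(\eta,0)$, $a_k=b_k=0$ for $k\ge 1$; then one transfers this value to $(a,b)$ using the Lipschitz continuity of $H$ on bounded sets of $\X$.

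First I would compute $H$ on $\K_\eta$ at its initial condition. Here $U=0$, $b_0=0$, so by \eqref{HamiJ} we get $H=\frac12 f(\eta)$, with $f(x)=-m^2x^2+\frac1{p+1}x^{2p+2}$ as in \eqref{def.f}. Since $p\ge 1$, the term $\frac1{p+1}\eta^{2p+2}$ is $O(\eta^4)=O(\eta^3)$, so $H|_{\K_\eta}=-\frac12 m^2\eta^2+O(\eta^4)=-\frac12 m^2\eta^2+O(\eta^3)$. This is exactly the claimed value, so the content of the lemma is really the stability of this value under an $\eta^3$ perturbation of the point.

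Next I would estimate $|H(a,b)-H|_{\K_\eta}|$. The point $(\eta,0,0,\dots)$ has $\X$-norm of order $\eta$, hence is $O(1)$-bounded, and $(a,b)$ is within $\eta^3$ of it, so $(a,b)$ lies in a fixed bounded ball of $\X$ (for $\eta<\eta_0$). On such a ball $H$ is Lipschitz: writing $H=\frac12\|v\|_{\ell^2}^2+\frac12(\|\nabla u\|^2-m^2\|u\|^2)+\frac1{2p+2}\int_M u^{2p+2}$, the quadratic part is Lipschitz on bounded sets with constant $O(1)$, and the term $\int_M u^{2p+2}$ is controlled via the Sobolev embedding $H^1(M)\hookrightarrow L^{2p+2}(M)$ (valid under Assumption \ref{assumption}, cf.\ Lemma \ref{lemsobo}), which makes $u\mapsto\int_M u^{2p+2}$ locally Lipschitz from $H^1$ to $\R$. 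Therefore $|H(a,b)-H|_{\K_\eta}|\le C\,\dist_\X\big((a,b),(\eta,0,\dots)\big)\le C\eta^3$, and combining with the previous step gives $H(a,b)=-\frac12 m^2\eta^2+O(\eta^3)$.

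I do not expect any serious obstacle here; the only point requiring a little care is the Lipschitz bound for the nonlinear term $\int_M u^{2p+2}$, which must be justified by Sobolev embedding and the elementary inequality $|x^{2p+2}-y^{2p+2}|\le C(|x|^{2p+1}+|y|^{2p+1})|x-y|$ together with Hölder; but this is routine given Lemma \ref{lemsobo} and Assumption \ref{assumption}. One should also note that the error in the lemma is stated as $O(\eta^3)$ even though the nonlinear contribution from $f(\eta)$ is in fact $O(\eta^4)$ — the $O(\eta^3)$ is forced by the $\eta^3$-thickness of the tube of initial data, and that is the dominant error term.
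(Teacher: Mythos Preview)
Your approach is the same as the paper's: compute $H$ on $\K_\eta$ at the convenient point $(\eta,0,0,\dots)$ and invoke conservation, then pass to $(a,b)$ via Lipschitz continuity of $H$ on bounded sets. However, there is a slip in the execution. You write $\dist_\X\big((a,b),(\eta,0,\dots)\big)\le C\eta^3$, but the hypothesis only says $\dist_\X\big((a,b),\K_\eta\big)\le\eta^3$. The point $(a,b)$ may be $\eta^3$-close to a point of $\K_\eta$ far from $(\eta,0,\dots)$, for instance near the outer turning point where $a_0\approx(p+1)^{1/(2p)}m^{1/p}$; in that case $\dist_\X\big((a,b),(\eta,0,\dots)\big)$ is of order $1$, not $\eta^3$, and the inequality as written fails. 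The same issue affects your boundedness argument: it is not proximity to the single point $(\eta,0,\dots)$ that bounds $(a,b)$, but proximity to $\K_\eta$ together with the fact that $\K_\eta$ itself is uniformly bounded.

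The fix is immediate and is exactly what the paper does: choose $(\tilde a,\tilde b)\in\K_\eta$ with $\|(a,b)-(\tilde a,\tilde b)\|_\X\le\eta^3$, note that $\K_\eta$ lies inside the homoclinic loop and hence is bounded in $\X$ uniformly in $\eta$ (so $(a,b)$ is bounded as well), and apply your Lipschitz estimate between $(a,b)$ and $(\tilde a,\tilde b)$. Since $H(\tilde a,\tilde b)=H(\eta,0,\dots)$ by conservation, you recover $|H(a,b)-H|_{\K_\eta}|\le C\eta^3$. With this correction your argument is complete and matches the paper's.
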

\begin{rema}
In the following, we prove in Lemma \ref{recip} a reciprocal result in the case $J$ small.
\end{rema}
\begin{proof} Let $(\tilde a,\tilde b)\in \K_\eta$ such that $\|(a,b)-(\tilde a, \tilde b)\|_\X \leq \eta^3$. On the one hand, since we have $(\eta,0)\bigotimes_{n=1}^{+\infty}(0,0)\in\K_\eta$, then 
$$H(\tilde a, \tilde b)=H\Big((\eta,0)\bigotimes_{n=1}^{+\infty}(0,0)\Big)=-\frac 1 2 m^2 \eta^2.$$
On the other hand denoting by $\dis \tilde u=\sum_{k=0}^{+\infty}\tilde a_{k}e_{k}$ and $\tilde J=\frac12\sum_{k=1}^{+\infty}\Big[(\lambda_{k}^{2}-m^{2})\tilde a^{2}_{k}+\tilde b^{2}_{k}\Big]$
\begin{multline*}
\Big|H(a,b)-H(\tilde a, \tilde b)\Big| \leq \\
\begin{aligned}
&\leq \frac12|\tilde b^{2}-b_0^2|+\frac {m^{2}}{2}|\tilde a^2_0-a^{2}_{0}|+|J-\tilde J|+\frac1{2(p+1)}\int_{M}|u^{2p+2}-\tilde u^{2p+2}|\text{d}x.\\
&\leq C \| (a,b)-(\tilde a,\tilde b)\|_\X \Big(  \| (a,b)\|_\X + \| (\tilde a,\tilde b)\|_\X \Big)+C\|u-\tilde u\|_{L^{2p+2}(M)}   \big(\|u\|^{2p+1}_{L^{2p+2}(M)} +\|\tilde u\|^{2p+1}_{L^{2p+2}(M)} \big) \\
&\leq C \| (a,b)-(\tilde a,\tilde b)\|_\X =O(\eta^3).
\end{aligned}
\end{multline*}
\end{proof}

We now fix an initial condition satisfying $\dist_\X\Big( (a(0),b(0)),\K_\eta\Big)\leq \eta^3$ with $0<\eta \ll 1$. In view of Lemma \ref{lem:energy} we can choose $\eta$ small enough  in such a way that 
\begin{equation}\label{val.neg}
H(a(0),b(0)) <0.
\end{equation}
By definition of $\K_\eta$ we have $a_0(0)> 0$ and without loss of generality we can also assume that $b_0(0)\geq 0$.

 \subsection{Estimates for $J$ small} 
 
 \begin{lemm} \ph \label{lemC2}
For $\eta$ small enough, as long as $J(t)\leq \eta^{5}$, there exists   $C_{1}=C_{1}(m)>0$ such that 
\begin{equation}\label{conf}
\eta/2\leq a_{0}(t)\leq C_{1},\quad |b_{0}(t)|\leq C_{1}.
\end{equation} 
\end{lemm}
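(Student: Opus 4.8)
The plan is to derive all four bounds from conservation of the energy $H$ alone, so that this is a soft argument with essentially no analytic content. First, since $H$ is a constant of the motion, Lemma~\ref{lem:energy} gives $H(a(t),b(t))=H(a(0),b(0))=-\frac12 m^2\eta^2+O(\eta^3)$ for every $t$; hence, after shrinking $\eta_0$, $-m^2\eta^2\leq H\leq-\frac14 m^2\eta^2<0$. I would then rewrite the energy \eqref{Hami} in the form
\begin{equation*}
H=\frac12 b_0^2-\frac12 m^2 a_0^2+J+\frac1{2(p+1)}\int_M u^{2p+2}\,\dd x,\qquad u=\sum_{k\geq0}a_k e_k,
\end{equation*}
and observe that the three quantities $\frac12 b_0^2$, $J$ and $\frac1{2(p+1)}\int_M u^{2p+2}$ are all nonnegative — for $J$ this is exactly Assumption~\ref{assumption0}, which gives $\lambda_k\geq\lambda_1>m$ for $k\geq1$. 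Each bound will then follow by retaining one of these terms and discarding the others; here we shall in fact only use $J\geq0$, and not the stronger hypothesis $J(t)\leq\eta^5$.

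For the lower bound on $a_0$, discarding $\frac12 b_0^2$, $J$ and the integral yields $H\geq-\frac12 m^2 a_0(t)^2$, i.e. $a_0(t)^2\geq 2|H|/m^2\geq\frac12\eta^2$; so $|a_0(t)|\geq\eta/\sqrt2$, and since this shows $a_0$ never vanishes while $a_0(0)>0$, by continuity $a_0(t)>0$, hence $a_0(t)\geq\eta/2$. For the upper bound, I would apply Jensen's inequality on the probability space $(M,\dd x)$ (recall ${\rm Vol}\,M=1$) with the convex function $x\mapsto x^{2p+2}$ and $\int_M u\,\dd x=\langle u,e_0\rangle=a_0$, giving $\int_M u^{2p+2}\,\dd x\geq a_0^{2p+2}$; then, discarding $\frac12 b_0^2$ and $J$, $H\geq-\frac12 m^2 a_0^2+\frac1{2(p+1)}a_0^{2p+2}=\frac12 f(a_0)$, so $f(a_0(t))\leq 2H<0$, which together with $a_0(t)>0$ and the fact that $f<0$ exactly on $(0,(p+1)^{1/(2p)}m^{1/p})$ forces $a_0(t)<(p+1)^{1/(2p)}m^{1/p}=:C_1(m)$. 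Finally, for $b_0$, retaining $\frac12 b_0^2$ and discarding $J$ and the integral, and using $H<0$, one gets $\frac12 b_0(t)^2\leq H+\frac12 m^2 a_0(t)^2\leq\frac12 m^2 C_1^2$, hence $|b_0(t)|\leq m C_1$; enlarging $C_1$ so that it also dominates $m C_1$ then gives both bounds simultaneously.

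I do not expect a real obstacle here. The two points needing a little care are: (i) checking that the error $O(\eta^3)$ produced by Lemma~\ref{lem:energy} is negligible against the scale $\eta^2$ governing the lower bound on $a_0$ — this is where $\eta$ must be taken small; and (ii) the convexity input (Jensen) used to bound the nonlinear term from below so as to get the upper bound on $a_0$. If one prefers to avoid Jensen, that upper bound can instead be obtained by a short continuity/bootstrap argument from the decomposition \eqref{HamiJ}: once $a_0$ is a priori bounded, the remainder $r$ there satisfies $r=O(J)$, since Lemma~\ref{lemsobo} bounds $\|U\|_{L^q}$ by $J^{1/2}$ while $\int_M U\,\dd x=0$ kills its linear term, so $r=O(\eta^5)$ and $\frac12 f(a_0)=H-\frac12 b_0^2-J-r\leq H+O(\eta^5)<0$ closes the bootstrap — and this is the only route that would genuinely use the hypothesis $J(t)\leq\eta^5$.
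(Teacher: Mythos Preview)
Your proof is correct and in fact cleaner than the paper's. The overall structure is the same --- use conservation of $H$ together with $H<0$ to trap $(a_0,b_0)$ --- and the lower bound on $a_0$ and the bound on $b_0$ are obtained exactly as in the paper. The genuine difference is in the upper bound on $a_0$: the paper bounds the nonlinear term from below by writing the binomial expansion of $(a_0+U)^{2p+2}$, controlling the cross terms via Lemma~\ref{lemsobo} and Young's inequality, and arriving at
\[
\frac{1}{2(p+1)}\int_M (a_0+U)^{2p+2}\,\dd x\;\geq\;\frac{1}{4(p+1)}a_0^{2p+2}-CJ^{p+1},
\]
which is where the hypothesis $J\leq\eta^5$ is actually used. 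Your Jensen argument, exploiting $\mathrm{Vol}\,M=1$ and $\int_M u\,\dd x=a_0$, gives the sharper pointwise inequality $\int_M u^{2p+2}\,\dd x\geq a_0^{2p+2}$ with no smallness assumption on $J$ at all. This is a real simplification: it replaces an $\varepsilon$-absorption computation by a one-line convexity inequality and shows that the confinement \eqref{conf} holds under the sole assumption $H<0$, not merely while $J\leq\eta^5$. Your alternative bootstrap route (via the decomposition \eqref{HamiJ} and $r=O(J)$) is essentially the paper's argument.
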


\begin{proof}
Assume that $J\leq \eta^{5}$. First we prove that $a_0$ cannot vanish: Indeed if it was the case, by~\eqref{HamiJ} we would have $H\geq 0$ which is in contradiction with \eqref{val.neg}. Next we prove that there exists~$C>0$ so that 
\begin{equation}\label{claim}
\frac1{2(p+1)}\int_{M}\big(a_{0}+U\big)^{2p+2}\text{d}x\geq  \frac 1{4(p+1)} \,a^{2p+2}_{0}-CJ^{p+1}.
\end{equation}
Write the binomial expansion
\begin{equation}\label{binom}
\int_{M}\big(a_{0}+U\big)^{2p+2}\text{d}x=a^{2p+2}_{0}+\sum^{2p+2}_{j=1}\binom {2p+2} {j}a^{2p+2-j}_{0}\int_{M}U^{j}\text{d}x,
\end{equation}
Apply  Lemma \ref{lemsobo} and use  the Young inequality
\begin{equation*} 
c_{1}c_{2}=(\eps c_{1})(\eps^{-1}c_{2})\leq \frac{\eps^{q}}{q}c_{1}^{q}+\frac{1}{r\eps^{r}}c_{2}^{r},\quad c_{1},c_{2}\geq 0,\quad \eps>0, \quad \frac1{q}+\frac1{r}=1,
\end{equation*} 
 to each term of the sum with $\eps>0$ small enough, and get
\begin{equation*}
\sum^{2p+2}_{j=1}\binom {2p+2} {j}a^{2p+2-j}_{0}\Big|\int_{M}U^{j}\text{d}x\Big| \leq C\sum^{2p+2}_{j=1}a^{2p+2-j}_{0}J^{j/2}\leq  \frac12 a^{2p+2}_{0}+CJ^{p+1},
\end{equation*}
which together with \eqref{binom} yields \eqref{claim}. Now, for $\eta>0$ small enough, $CJ^{p+1}\leq J$ and by \eqref{HamiJ} and~\eqref{claim} we obtain
\begin{equation}\label{borne}
 \frac12(b^{2}_{0}-m^{2}a^{2}_{0})+\frac 1 {4(p+1)}\,a^{2p+2}_{0}\leq H=-\frac12m^{2}\eta^{2}+\mathcal{O}(\eta^{3})\leq -\frac14m^{2}\eta^{2},
\end{equation}
from which we deduce  that   $|b_{0}|\leq C_{1}$ and $0<a_{0}\leq C_{1}$ for some constant $C_1$ depending on $m$.  Furthermore,  the  bound \eqref{borne} implies $-m^{2}a^{2}_{0}/2\leq H \leq -\frac14m^{2}\eta^{2}$ which completes the proof.
\end{proof}

In the following we use the additional notations $I$ and $\hat r$:
\begin{equation*} 
I:=\frac12\sum_{k=1}^{+\infty}\Big[\big(\lambda_{k}^{2}-m^{2}+(2p+1)a^{2p}_{0}\big)a^{2}_{k}+b^{2}_{k}\Big],
\end{equation*}
\begin{equation*} 
\hat r(t)=\frac{1}{2(p+1)}\int_{M}\big((a_0+U)^{2p+2}-a_0^{2p+2}-(2p+1)a_0^{2p}U^2\big)\text{d}x.
\end{equation*}
So that $H$ reads
$$H=\frac12 b_0^2+\frac12 f(a_0)+I+\hat r.$$
Observe that as long as $J\leq C\eta^{5}$, since $a_{0}$ is bounded (see \eqref{conf}), there exists $K_{0}>0$ so that for all $(a_{k},b_{k})_{k\geq 1}\in h^{1} \times \ell^2 $, 
\begin{equation}\label{equiv}
J\leq I\leq K_{0}J.
\end{equation}

\begin{lemm}\ph \label{Lem:I}
As long as $J(t)\leq \eta^{5}$,  we have the a priori bound
\begin{equation}\label{Ipoint}
|\dot{I}|\leq Ca^{2p-1}_{0}|\dot{a}_{0}|I+CI^{3/2}.
\end{equation}
\end{lemm}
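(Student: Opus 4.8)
The quantity $I$ is a Lyapunov-type functional adapted to the linearized equation \eqref{systn} around the mode-$0$ orbit, and the goal is to bound its time derivative by itself plus a controlled ``correction'' proportional to $|\dot a_0|$. The plan is to differentiate $I$ in time, substitute the equations of motion \eqref{systn} for $\dot a_k,\dot b_k$, and organize the resulting terms into three groups: (a) the terms coming from the fact that $I$ is not an exact constant of the linear flow because $a_0$ depends on $t$ (these produce the factor $\dot a_0$); (b) the terms coming from the quadratic part of the linear flow, which cancel; and (c) the terms coming from the nonlinear remainder $q_k$, which I will estimate by $CI^{3/2}$.

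\textbf{Key steps.} First I differentiate:
\[
\dot I=(2p+1)p\,a_0^{2p-1}\dot a_0\sum_{k\geq1}a_k^2+\sum_{k\geq1}\Big[\big(\lambda_k^2-m^2+(2p+1)a_0^{2p}\big)a_k\dot a_k+b_k\dot b_k\Big].
\]
Plugging in $\dot a_k=b_k$ and $\dot b_k=-(\lambda_k^2-m^2)a_k-(2p+1)a_0^{2p}a_k-q_k$, the bracketed sum telescopes: the terms $\big(\lambda_k^2-m^2+(2p+1)a_0^{2p}\big)a_kb_k$ and $-b_k\big(\lambda_k^2-m^2+(2p+1)a_0^{2p}\big)a_k$ cancel exactly, leaving only $-\sum_{k\geq1}b_kq_k$. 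Hence
\[
\dot I=(2p+1)p\,a_0^{2p-1}\dot a_0\sum_{k\geq1}a_k^2-\sum_{k\geq1}b_kq_k.
\]
For the first term, since $\sum_{k\geq1}a_k^2\leq\|U\|_{\X}^2\leq CI$ (using $J\sim\|U\|_{\X}^2$ and \eqref{equiv}), I get $\big|(2p+1)p\,a_0^{2p-1}\dot a_0\sum_k a_k^2\big|\leq Ca_0^{2p-1}|\dot a_0|\,I$. For the second term, I bound $\big|\sum_{k\geq1}b_kq_k\big|\leq\|(q_k)_{k\geq1}\|_{\ell^2}\,\|(b_k)_{k\geq1}\|_{\ell^2}$ by Cauchy--Schwarz; then $\|(b_k)\|_{\ell^2}\leq CI^{1/2}$, and $\|(q_k)\|_{\ell^2}$ is controlled by expanding $q_k$ from \eqref{def.pk} via the binomial formula as a sum of terms $\binom{2p+1}{j}a_0^{2p+1-j}\int_M U^j e_k\,\mathrm dx$ with $j\geq2$, so that $\|(q_k)\|_{\ell^2}\leq C\sum_{j\geq2}a_0^{2p+1-j}\|U^j\|_{L^2}\leq C\sum_{j\geq2}a_0^{2p+1-j}\|U\|_{L^{2j}}^j$; using Lemma \ref{lemsobo} this is $\leq C\sum_{j\geq2}a_0^{2p+1-j}J^{j/2}$, and since $a_0$ is bounded (Lemma \ref{lemC2}) and $J\leq\eta^5\leq1$ the dominant power is $j=2$, giving $\|(q_k)\|_{\ell^2}\leq CJ\leq CI$, whence $\big|\sum_k b_kq_k\big|\leq CI\cdot I^{1/2}=CI^{3/2}$. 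Combining the two bounds yields \eqref{Ipoint}.

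\textbf{Main obstacle.} The genuinely delicate point is controlling $\|(q_k)_{k\geq1}\|_{\ell^2}$: one must pass from the $L^2(M)$ norm of the function $(a_0+U)^{2p+1}-a_0^{2p+1}-(2p+1)a_0^{2p}U$ to the $\ell^2$ norm of its Fourier coefficients against $(e_k)_{k\geq1}$, which is fine since $(e_k)$ is orthonormal, but then one needs the Sobolev embeddings of Lemma \ref{lemsobo} to be applicable to each power $U^j$ appearing in the expansion --- this is exactly why Assumption \ref{assumption} restricts the dimension and the value of $p$ (in dimension $3$ only $L^q$ with $q\leq6$ is available, forcing $p=1$ so that only $U^2$ and $U^3$ occur). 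Checking that all exponents stay in the admissible range, and that the boundedness of $a_0$ from Lemma \ref{lemC2} absorbs the positive powers $a_0^{2p+1-j}$ uniformly, is the careful bookkeeping that makes the estimate go through; everything else is the algebraic cancellation described above.
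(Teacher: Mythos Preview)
Your proof is correct and follows essentially the same approach as the paper: differentiate $I$, use the equations \eqref{systn} to obtain the exact formula $\dot I=(2p+1)p\,a_0^{2p-1}\dot a_0\sum_{k\geq1}a_k^2-\sum_{k\geq1}b_kq_k$, bound the first term by $Ca_0^{2p-1}|\dot a_0|\,I$, and control the second by Cauchy--Schwarz together with Parseval and Lemma~\ref{lemsobo} to get $\|(q_k)\|_{\ell^2}\leq CI$. The only cosmetic difference is that the paper keeps just the two extreme terms $\|U\|_{L^4}^2$ and $\|U\|_{L^{2(2p+1)}}^{2p+1}$ in the expansion (absorbing the intermediate powers via $|a_0|\leq C$), whereas you write out the full binomial sum; both lead to the same estimate.
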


\begin{proof} We recall the definition \eqref{def.pk} of the $q_k$, and that then the equations on $(a_k,b_k)$ for $k\geq 1$ read
\begin{equation*}
\dot{a}_{k} =b_{k},\quad \dot{b}_{k} =-\big(\lambda_{k}^{2}-m^{2}+(2p+1)a^{2p}_{0}\big)a_{k}-q_{k}.
\end{equation*}
With this system, we compute  
\begin{equation}\label{b1}
|\dot{I}|=\big|p(2p+1)a^{2p-1}_{0}\dot{a}_{0}\sum_{k=1}^{+\infty}a^{2}_{k}-\sum_{k=1}^{+\infty}b_{k}q_{k}\big|\leq p(2p+1)a^{2p-1}_{0}|\dot{a}_{0}|\sum_{k=1}^{+\infty}a^{2}_{k}+\|b\|_{\ell^{2}(\N^{*})}\|q\|_{\ell^{2}(\N^{*})}.
\end{equation}
We have $ \|b\|_{\ell^{2}(\N^{*})} \leq CI^{1/2}$. Then by Parseval, \eqref{equiv} and the fact that $|a_{0}|\leq C$
\begin{eqnarray}\label{b2}
\|q\|_{\ell^{2}(\N^{*})}&=&\big\|(a_{0}+U)^{2p+1}-a^{2p+1}_{0}-(2p+1)a^{2p}_{0}U\big\|_{L^{2}(M)}\nonumber\\
&\leq &C \|U^{2}\|_{L^{2}(M)}+C \|U^{2p+1}\|_{L^{2}(M)}\nonumber\\
&=&C \|U\|^{2}_{L^{4}(M)}+C \|U\|^{2p+1}_{L^{2(2p+1)}(M)}\nonumber\\
&\leq &CI+CI^{p+\frac12}\leq CI,
\end{eqnarray} 
where in the last line we used Lemma \ref{lemsobo}. Then \eqref{b2} together with \eqref{b1} gives the result.
\end{proof}

\begin{lemm}\label{recip}
If $(a,b)\in\X$ such that $H(a,b)=-\frac 1 2 m^2 \eta^2 +O(\eta^3)$,  $\eta\leq a_0$ and $J\leq \eta^3$, then
$$\dist_\X\Big( (a_0,b_0,0,\cdots),\K_\eta\Big)\leq C\eta^2.$$
More precisely, $(a_0,b_0,0,\cdots)$ is $\eta^2$-close to $(\tilde a_0,\tilde b_0,0,\cdots)\in \K_\eta$ defined by 
\begin{equation*}
(\tilde a_0,\tilde b_0)= \left\{\begin{array}{ll} 
(a_0,sgn(b_0)\sqrt{f(\eta)-f(a_0)})\quad &\text{if} \quad  a_0\in  [\delta, \delta'] , \\[6pt]  
(f^{-1}\big(f(\eta)-b_0^2\big), b_0)   &\text{if} \quad a_0\notin [\delta,\delta'].
\end{array} \right.
\end{equation*}
\end{lemm}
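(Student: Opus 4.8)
The idea is that when $J$ is small, the energy $H$ essentially reduces to the one‑dimensional Hamiltonian $H_0 = \frac12 b_0^2 + \frac12 f(a_0)$ of the toy system \eqref{toy0}, and the level set $\{H_0 = -\frac12 m^2\eta^2\}$ is exactly the closed curve whose lift to $h^1\times\ell^2$ is $\K_\eta$. So the whole point is to quantify how far $(a_0,b_0,0,\dots)$ can be from that level curve given that the true energy is only $-\frac12 m^2\eta^2 + O(\eta^3)$ plus error terms of size $O(J)=O(\eta^3)$, and then to translate a smallness statement about $|H_0(a_0,b_0) + \frac12 m^2\eta^2|$ into a smallness statement about the distance in $\X$ to the nearest point of $\K_\eta$. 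The gain from $\eta^3$ in the energy to $\eta^2$ in the distance reflects the fact that near the hyperbolic point the curve $\K_\eta$ has a "corner" where the relation between energy displacement and spatial displacement degenerates to quadratic order; away from that corner (i.e. on the middle arc $a_0\in[\delta,\delta']$, where $|f'|$ is bounded below) the relation is Lipschitz and one even gets $O(\eta^3)$, hence the two‑case definition of $(\tilde a_0,\tilde b_0)$.

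\textbf{Step 1: reduce $H$ to $H_0$.} Starting from \eqref{HamiJ}, write $H = H_0(a_0,b_0) + J + r$ with $r$ as in \eqref{def:r}. By Lemma \ref{lemsobo}, $|r| \leq C\sum_{j=2}^{2p+2} a_0^{2p+2-j}\|U\|_{L^j}^j \leq C J$ (using $a_0$ bounded, which holds here since $H<0$ forces $a_0$ bounded as in the proof of Lemma \ref{lemC2}; alternatively one notes $\eta\leq a_0$ and the hypothesis on $H$ directly bound $a_0$). Combined with $0 \le J \le \eta^3$ and $H(a,b) = -\frac12 m^2\eta^2 + O(\eta^3)$, this gives
\[
\Big| H_0(a_0,b_0) + \tfrac12 m^2\eta^2 \Big| = \Big| \tfrac12 b_0^2 + \tfrac12 f(a_0) - \tfrac12 f(\eta) \Big| \leq C\eta^3 .
\]
Note $f(\eta) = -m^2\eta^2 + O(\eta^{2p+2}) = -m^2\eta^2 + O(\eta^3)$, so one may equally work with $f(\eta)$ or with $-m^2\eta^2$ on the right.

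\textbf{Step 2: from energy closeness to spatial closeness.} Fix the target point $(\tilde a_0,\tilde b_0)\in\K_\eta$ by the two‑case formula in the statement, which is designed so that $H_0(\tilde a_0,\tilde b_0) = \frac12 f(\eta)$ exactly: in the first case $\tilde b_0^2 = f(\eta) - f(a_0)$ with $\tilde a_0 = a_0$, in the second $f(\tilde a_0) = f(\eta) - b_0^2$ with $\tilde b_0 = b_0$ (here one must check these are well defined: $f(\eta)-f(a_0)\ge0$ when $a_0\in[\delta,\delta']$ given the energy bound, using monotonicity of $f$ on that arc and $\eta$ small; and $f$ is invertible on the relevant branch when $a_0\notin[\delta,\delta']$). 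Then:
\begin{itemize}
\item[$\bullet$] If $a_0\in[\delta,\delta']$: $|b_0^2 - \tilde b_0^2| = |b_0^2 - (f(\eta)-f(a_0))| \le 2|H_0(a_0,b_0)+\tfrac12 m^2\eta^2| + O(\eta^3)\le C\eta^3$; since $|b_0|,|\tilde b_0|$ are bounded (indeed $O(1)$, and in fact $|\tilde b_0|\ge c>0$ because $f(\eta)-f(a_0)\ge f(\eta)-f(\delta)>0$ is bounded below), $|b_0-\tilde b_0|\le C\eta^3$, and $\tilde a_0 - a_0 = 0$. So the distance is $O(\eta^3)\le O(\eta^2)$.
\item[$\bullet$] If $a_0\notin[\delta,\delta']$: $\tilde b_0 = b_0$ and $|f(a_0) - f(\tilde a_0)| \le C\eta^3$. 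On this region $f'(a_0)$ may vanish (near $a_0=m^{1/p}$ is excluded since that lies in $[\delta,\delta']$; the vanishing of $f'$ at $a_0=0$ is the relevant degeneracy) — but here $\eta\le a_0$, and for $a_0$ near $0$ we have $f(x) \approx -m^2 x^2$, so $|f(a_0)-f(\tilde a_0)| \approx m^2|a_0-\tilde a_0|\,|a_0+\tilde a_0| \ge m^2|a_0-\tilde a_0|\cdot\max(a_0,\tilde a_0)$; since $\max(a_0,\tilde a_0)\ge\eta$ we get $|a_0-\tilde a_0|\le C\eta^3/\eta = C\eta^2$. (On the portion $a_0 > \delta'$, $f'$ is bounded below and one again gets $O(\eta^3)$.)
\end{itemize}
Combining both cases, $\dist_\X\big((a_0,b_0,0,\dots),(\tilde a_0,\tilde b_0,0,\dots)\big) = |a_0-\tilde a_0| + |b_0-\tilde b_0| \le C\eta^2$, which is the claim.

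\textbf{Main obstacle.} The delicate point is Step 2 in the region $a_0\notin[\delta,\delta']$ near the hyperbolic point: there $f'\to 0$, so an $O(\eta^3)$ perturbation of the energy could a priori move $a_0$ by much more than $O(\eta^2)$ if $a_0$ were allowed to be, say, $O(\eta^{3/2})$. The hypothesis $\eta\le a_0$ is exactly what rules this out — it forces $a_0+\tilde a_0\gtrsim\eta$, converting the quadratic degeneracy of $f$ at $0$ into a loss of only one power of $\eta$. One must also be slightly careful that the two‑case definition genuinely lands on $\K_\eta$ (both the existence of the square root / preimage and the fact that the resulting point has $H_0 = \frac12 f(\eta)$, hence lies on the periodic orbit through $(\eta,0)$), and that the cases overlap consistently near $a_0 = \delta$ and $a_0=\delta'$; these are routine given $\eta$ small but should be stated.
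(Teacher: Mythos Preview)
Your proof is correct and follows essentially the same approach as the paper's: first reduce to $b_0^2 + f(a_0) = f(\eta) + O(\eta^3)$ using $J,r = O(\eta^3)$, then split into the two cases and use, respectively, that $|b_0|$ is bounded below on $[\delta,\delta']$ and that $|f'|\gtrsim\eta$ (equivalently $a_0+\tilde a_0\gtrsim\eta$) near the hyperbolic point. The paper phrases the second case via the mean value theorem applied to $f^{-1}$ (using $\sup|(f^{-1})'|\leq C/\eta$ since $a_0\geq\eta$), whereas you factor $f(a_0)-f(\tilde a_0)$ directly; these are equivalent, and your discussion of the ``main obstacle'' makes the role of the hypothesis $a_0\geq\eta$ more explicit than the paper does.
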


\begin{proof}
From the expression \eqref{HamiJ} of the energy, the assumption reads
$$b_0^2+f(a_0)+J+r=-m^2\eta^2+O(\eta^3).$$
On the one hand, observe that if $J\leq \eta^3$, then $r\leq c\eta^3$ : the proof is similar to the one of \eqref{b2}. On the other hand, $f(\eta)=-m^2\eta^2+O(\eta^3)$. Thus we have
\begin{equation}\label{eq:energy}
b_0^2+f(a_0)=f(\eta)+O(\eta^3). 
\end{equation}
Recall that $(\tilde a_0,\tilde b_0,0,\cdots)$ belongs to $\K_\eta$ given that
\begin{equation}\label{keta}
\tilde b_0^2+f(\tilde a_0)=f(\eta).
\end{equation}
$\bullet$ Assume that $a_{0}\notin [\delta,\delta']$.  From \eqref{eq:energy} we deduce
 \begin{equation*}
 a_{0}=f^{-1}\Big(f({\eta})-b^{2}_{0}+O(\eta^3)\Big).
 \end{equation*}
 Since $a_0\geq\eta$,  thus $|f'(a_{0})|\geq c_{0}\eta$ and we can apply the mean value theorem to get  
\begin{equation*}
|\tilde a_0-a_{0}|\leq C\eta^3\sup_{}|(f^{-1})'|\leq C\eta^{2}.
\end{equation*}
$\bullet$  In the region $a_{0}\in  [\delta,\delta']$, we define $g(b_{0})=b^{2}_{0}$. Then  we have $|g'(b_{0})|\geq c$ and we can perform a similar argument to get $|\tilde b_{0}-b_{0}|\leq C\eta^{3}$.
\end{proof}

\subsection{First loop : proof of Theorem \ref{thm4} $(i)$}\label{sect24}
This part is devoted to the proof of $(i)$ of Theorem~\ref{thm4}. More precisely, we prove a stronger version of it in Lemma \ref{LemT}, which will then be useful to prove $(ii)$ of the theorem (in the next part). 

Recall that in all this section we have  fixed an  initial condition such that   
\begin{equation}\label{dista}
\dist_\X\Big( (a(0),b(0)),\K_\eta\Big)\leq \eta^3
\end{equation}
  and $0<\eta \ll 1$ such that \eqref{val.neg} is satisfied.

\begin{lemm}\label{lem:bootstrap}
There exists $K>1$ independent of $\eta>0$ such that: as long as $b_0(t)\geq 0$   and $|t|\leq \eta^{-2}$, $J$ satisfies 
$$J(t)\leq KJ(0)\leq \eta^{5}.$$ 
\end{lemm}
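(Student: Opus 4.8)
The plan is to set up a bootstrap (continuity) argument on the quantity $I$ (equivalent to $J$ by \eqref{equiv}), using the differential inequality from Lemma \ref{Lem:I}. First I would note that by hypothesis \eqref{dista} we have $J(0)\lesssim \eta^6$, so in particular $J(0)\leq \eta^5/K$ for any fixed $K$ once $\eta$ is small; thus the claimed bound $KJ(0)\leq\eta^5$ is automatic, and the real content is $J(t)\leq KJ(0)$. Define $T^*$ to be the supremum of times $t$ with $|t|\le \eta^{-2}$, $b_0\ge 0$ on $[0,t]$ (or $[t,0]$), and $I(s)\le K' I(0)$ for $s$ in that interval, where $K'$ is a constant to be chosen; by continuity $T^*>0$ and it suffices to show $T^*$ cannot be the first time equality $I=K'I(0)$ is reached (the other ways of exiting — the time bound or $b_0$ changing sign — are the boundaries we are allowed to stop at, and on $[0,T^*]$ we certainly have $J\le \eta^5$ so Lemmas \ref{lemC2}, \ref{Lem:I} apply).

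The heart of the estimate is to integrate \eqref{Ipoint}. On the bootstrap interval $I\le K'I(0)\lesssim \eta^6$, so the cubic term contributes $CI^{3/2}\le C(K'I(0))^{1/2} I \lesssim \eta^3 I$, and integrating over a time of length $\le \eta^{-2}$ gives a factor $\exp(C\eta^3\cdot\eta^{-2})=\exp(C\eta)\le 2$ for $\eta$ small — harmless. The first term $Ca_0^{2p-1}|\dot a_0|\,I$ is the important one: here one uses $\dot a_0=b_0\ge 0$, so $a_0$ is \emph{monotone increasing} on the interval and $|\dot a_0|=\dot a_0$. Hence
\begin{equation*}
\int_0^t a_0^{2p-1}(s)|\dot a_0(s)|\,\dd s=\int_0^t a_0^{2p-1}(s)\dot a_0(s)\,\dd s=\frac{1}{2p}\big(a_0(t)^{2p}-a_0(0)^{2p}\big)\le \frac{1}{2p}C_1^{2p},
\end{equation*}
using the uniform bound $a_0\le C_1$ from \eqref{conf}. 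This is where the sign condition $b_0\ge0$ is indispensable: without it $|\dot a_0|$ need not be an exact derivative and the integral could be as large as the total variation of $a_0^{2p}$ over many oscillations. So by Gr\"onwall,
\begin{equation*}
I(t)\le I(0)\exp\Big(C\int_0^t a_0^{2p-1}|\dot a_0|\,\dd s+C\eta\Big)\le I(0)\,e^{C_2}
\end{equation*}
for a constant $C_2$ depending only on $m,p$. Choosing $K'=2e^{C_2}$ (say) we get $I(t)\le \tfrac12 K' I(0)<K'I(0)$, strictly, which forbids $T^*$ from being an interior exit time; so the bound holds up to $|t|=\eta^{-2}$ or until $b_0$ first hits $0$, whichever comes first. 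Translating back via \eqref{equiv} ($J\le I\le K_0 J$), $J(t)\le I(t)\le e^{C_2}I(0)\le K_0 e^{C_2}J(0)=:KJ(0)$, and since $J(0)\lesssim\eta^6\le\eta^5/K$ for $\eta$ small, also $KJ(0)\le\eta^5$, which is exactly the claim.

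I expect the only genuinely delicate point to be the very setup of the bootstrap: one must make sure the set of times where $I\le K'I(0)$, $b_0\ge0$, $|t|\le\eta^{-2}$ is relatively open from the right (continuity of $I$, $b_0$, and of the flow in $\X$), that on this set the hypotheses $J\le\eta^5$ needed to invoke Lemmas \ref{lemC2} and \ref{Lem:I} indeed hold (which follows a posteriori from $I\le K'I(0)\lesssim\eta^6\ll\eta^5$), and that the constant $K$ ends up independent of $\eta$ — which it does, since $C_2$ comes only from $C_1$, $K_0$, $p$, $m$ and the $\eta$-dependence of the cubic term's contribution washes out to a factor $\le 2$. The sign restriction $b_0\ge0$ in the statement is precisely what makes the dominant term in \eqref{Ipoint} integrable with an $\eta$-independent bound; handling the return half of the loop (where $b_0\le 0$) is deferred and will be done by a symmetric argument in a later lemma.
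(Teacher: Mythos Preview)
Your proof is correct and follows essentially the same route as the paper's: a bootstrap on $I$, Gr\"onwall applied to the differential inequality \eqref{Ipoint}, and the key observation that $b_0\ge 0$ makes $\int a_0^{2p-1}|\dot a_0|\,\dd s$ an exact integral bounded by $C_1^{2p}$ via \eqref{conf}. Your write-up is in fact more explicit than the paper's about the bootstrap continuity argument and about why the $I^{3/2}$ term is harmless over times $\le\eta^{-2}$.
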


\begin{proof} 
We prove it by a bootstrap argument. As long as $J(t)\leq \eta^{5}$ and $b_0(t)\geq 0$,   the estimate~\eqref{Ipoint} holds true and by Gronwall we get
\begin{eqnarray*}
J(t)\leq I(t)&\leq& I(0)\e^{\int_{0}^{|t|}(\dot{a}_{0}(s)a^{2p-1}_{0}(s)+\eta^{5/2})\text{d}s}\\
 &\leq& I(0)\e^{(a_0(t)^{2p}-a_0(0)^{2p}+|t|\eta^{5/2})}\\
 &\leq &I(0)\exp{\big(2C^{2p}_{1}(m)+1\big)},
\end{eqnarray*}
where $C_{1}(m)>0$ is defined in \eqref{lemC2}.

Recall that by \eqref{conf}, $a_0$ is bounded as long as $J(t)\leq \eta^{5}$, and that $I(0)\leq K_{0}\eta^6$ by \eqref{dista} (recall the definition of $K_{0}$ in \eqref{equiv}). Therefore, under the assumptions  that $b_0(t)\geq 0$   and $|t|\leq \eta^{-2}$ we get  that as long as $J(t)\leq \eta^{5}$, we have $J(t)\leq K_{1}I(0)$, which gives the result for $\eta$ sufficiently small, since $I(0)\leq K_{0}J(0)$.
\end{proof}

\begin{lemm}\label{lem:tau}
Let $\alpha_0$ be a real with $0<\alpha_{0}\leq\delta$. We suppose that the initial conditions $(a(0),b(0))$ satisfy $0<a_0(0)<\alpha_0$ and $b_0(0)\geq 0$.
Then there exist times $\tau(\alpha_0)$ and $\tau_\eta$ such that
$$a_0(\tau(\alpha_0))=\alpha_0, \;a_0(\tau_\eta)=\delta \quad \text{and} \quad b_0(t)>0 \;\text{ for }\;t\in]0,\tau_\eta].$$
Moreover, we have the estimates
$$\tau(\alpha_0)\leq\tau_{\eta} \leq C\ln \frac1{\eta,}$$
and if $b_0(0)=0$,
$$c\,\ln \frac1{\eta}\leq \tau_\eta.$$
\end{lemm}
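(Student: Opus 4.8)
The statement is essentially a quantitative study of the scalar ODE system \eqref{toy0} (or equivalently \eqref{syst0}, since while $J$ is small the $q_0$ term is negligible), i.e. a one-dimensional Hamiltonian whose phase portrait is determined by the level sets of $H_0 = \tfrac12 b_0^2 + \tfrac12 f(a_0)$. The idea is that by Lemma~\ref{lem:energy} we have $H(a(0),b(0)) = -\tfrac12 m^2\eta^2 + O(\eta^3)$, and by Lemma~\ref{lem:bootstrap} $J(t)$ stays $\lesssim \eta^5$ as long as $b_0(t)\geq 0$ and $|t|\leq \eta^{-2}$; thus along the trajectory the quantity $b_0^2 + f(a_0)$ remains $f(\eta) + O(\eta^3)$, so $(a_0,b_0)$ is trapped in an $O(\eta^3)$-neighbourhood of the closed curve $\K_\eta$. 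On that curve $b_0$ stays strictly positive as long as $\delta \leq a_0 \leq \delta'$ is not reached and $a_0$ is increasing from the turning point, so the existence of $\tau(\alpha_0)$ and $\tau_\eta$ follows from continuity and the fact that $\dot a_0 = b_0 > 0$ prevents $a_0$ from stalling before it reaches $\delta$.

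\textbf{Key steps.} First I would set up the bootstrap: assume $J(t)\leq \eta^5$ and $b_0(t)\geq 0$ on a maximal interval, so Lemma~\ref{lemC2} gives $\eta/2\leq a_0\leq C_1$, $|b_0|\leq C_1$, and \eqref{HamiJ} together with the $r$-estimate (as in \eqref{b2}) gives $b_0^2 = f(\eta) - f(a_0) + O(\eta^3)$. Second, I would show that while $a_0\leq \delta$ one has $f(\eta) - f(a_0) \geq f(\eta) - f(\delta) \geq c > 0$ (because $f$ is decreasing on $[0,m^{1/p}]\supset[\eta,\delta]$, hence $f(a_0)\leq f(\delta) < f(\eta)$ for $a_0\in[\eta,\delta]$ — wait, rather $f(\eta)> f(\delta)$ since $f$ decreasing and $\eta<\delta$), so $b_0(t)\geq c' > 0$ on that range, uniformly in $\eta$. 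Consequently $\dot a_0 = b_0 \geq c'$, so $a_0(t)$ increases at a definite rate; it must reach $\alpha_0$ and then $\delta$ in finite time, and on the whole interval $]0,\tau_\eta]$ we have $b_0 > 0$, which also keeps us inside the bootstrap regime; this defines $\tau(\alpha_0)\leq \tau_\eta$. Third, for the upper bound $\tau_\eta \leq C\ln\frac1\eta$: split the motion into the region $a_0\in[\eta,\alpha_0]$ near the turning point and the region $a_0\in[\alpha_0,\delta]$ away from it; on the latter $b_0\geq c'$ gives a time $O(1)$, while on the former $b_0 = \sqrt{f(\eta)-f(a_0)}\big(1+O(\eta)\big)$ and near $a_0=\eta$ we have $f(\eta)-f(a_0)\approx |f'(\eta)|(a_0-\eta)\approx 2m^2\eta(a_0-\eta)$, wait more carefully $f(\eta)-f(a_0) \sim$ — actually since $f'(\eta) = -2m^2\eta + O(\eta^{2p+1})$, the time to escape the turning point is $\int_\eta^{\alpha_0}\frac{da_0}{\sqrt{f(\eta)-f(a_0)}}$, which is logarithmically divergent as the lower endpoint: this integral behaves like $\frac{1}{m}\ln\frac1\eta$ (this is the standard ``time to leave a hyperbolic fixed point'' estimate, here the fixed point being the origin which $\K_\eta$ passes near). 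Finally, the lower bound $c\ln\frac1\eta\leq \tau_\eta$ when $b_0(0)=0$: then the trajectory starts exactly at the turning point $a_0=\eta$ on $\K_\eta$ (up to $O(\eta^3)$), and the same integral $\int_\eta^\delta \frac{da_0}{b_0}$ now genuinely starts at $a_0=\eta$ where $b_0$ vanishes, forcing $\tau_\eta \gtrsim \int_{2\eta}^{\delta}\frac{da_0}{\sqrt{f(\eta)-f(a_0)}+O(\eta^{3/2})}\gtrsim \ln\frac1\eta$.

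\textbf{Main obstacle.} The delicate point is the logarithmic time estimate near the turning point $a_0\approx\eta$: there $b_0$ is small (comparable to $\eta$ itself, not to $\eta^{3/2}$), so the $O(\eta^3)$ error in the energy relation $b_0^2 = f(\eta)-f(a_0) + O(\eta^3)$ is of relative size $O(\eta)$ compared to $b_0^2 \sim m^2\eta^2$ at the scale $a_0 - \eta \sim \eta$ — this is fine, but one must check the error does not destroy the sign $b_0>0$ nor the logarithmic divergence. Concretely one should stay at distance $\gtrsim\eta^2$ (say) from $a_0=\eta$, i.e. replace the lower endpoint $\eta$ by $\eta + \eta^{2}$ or simply by $2\eta$, over which range the main term $\sqrt{f(\eta)-f(a_0)}\gtrsim \sqrt{m^2\eta\cdot\eta} = m\eta$ dominates the error $O(\eta^{3/2})$; then $\int_{2\eta}^{\delta}\frac{da_0}{\sqrt{f(\eta)-f(a_0)}}$ is comparable to $\frac1m\ln\frac1\eta$ both from above and below by comparison with $\int \frac{da_0}{m\sqrt{\eta(a_0-\eta)} + \eta^{2p+1}}$-type integrals. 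One also has to be careful that the bootstrap interval from Lemma~\ref{lem:bootstrap} ($|t|\leq\eta^{-2}$) comfortably contains $\tau_\eta = O(\ln\frac1\eta)$, which it does for $\eta$ small; and that $b_0$ does not change sign before $a_0$ reaches $\delta$, which follows because $b_0^2 = f(\eta)-f(a_0)+O(\eta^3) \geq f(\eta)-f(\delta)+O(\eta^3) \geq c>0$ throughout $a_0\in[\eta,\delta]$ once $\eta$ is small enough.
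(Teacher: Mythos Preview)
Your overall plan (reduce to the scalar Hamiltonian, use energy conservation to write $b_0^2 = f(\eta)-f(a_0)+\text{error}$, and read off the time as $\int da_0/b_0$, which is logarithmic because of the near-hyperbolic passage close to the origin) is exactly the right one and matches the paper's strategy. But Step~2 of your ``Key steps'' contains a genuine error that you then repeat verbatim in the last sentence of the ``Main obstacle'' paragraph, and it is precisely the point where the argument needs the most care.

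You claim that for $a_0\in[\eta,\delta]$ one has $f(\eta)-f(a_0)\geq f(\eta)-f(\delta)\geq c>0$, and hence $b_0\geq c'>0$ uniformly. The inequality is in the wrong direction: since $f$ is \emph{decreasing} on $[0,m^{1/p}]$, for $\eta\leq a_0\leq\delta$ we have $f(\delta)\leq f(a_0)\leq f(\eta)$, so $0\leq f(\eta)-f(a_0)\leq f(\eta)-f(\delta)$; in particular $f(\eta)-f(a_0)$ vanishes at $a_0=\eta$ and is of order $\eta(a_0-\eta)$ nearby. So $b_0$ is \emph{not} bounded below by a uniform positive constant near the turning point, and your argument for ``$b_0(t)>0$ on $(0,\tau_\eta]$'' (and hence for the very existence of $\tau(\alpha_0),\tau_\eta$) collapses. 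With only the bound $b_0^2=f(\eta)-f(a_0)+O(\eta^3)$, the error term is comparable to the main term on the range $a_0-\eta\lesssim\eta^2$, so you cannot exclude a spurious zero of $b_0$ there from the energy relation alone.

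The paper fixes this by first reducing (via a short backward-in-time argument) to the special initial condition $a_0(0)=\eta$, $b_0(0)=0$, for which the error in the energy relation is $\eta^5\phi(t)$ with $\phi$ bounded, and then---this is the key extra ingredient you are missing---proving the quadratic smallness $|\phi(t)|\leq C\min(1,t^2)$ near $t=0$ (using $\dot\phi(0)=0$, which relies on $b_0(0)=0$). Combining this with $\ddot a_0\geq c\eta$ they obtain the two-sided bound $c(a_0^2-\eta^2)\leq b_0^2\leq C(a_0^2-\eta^2)$ uniformly on $[\eta,\delta]$, from which positivity of $b_0$, existence of $\tau_\eta$, and both time estimates follow directly via $\int_\eta^\delta d\alpha/\sqrt{\alpha^2-\eta^2}\sim\ln(1/\eta)$. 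A quicker patch for the positivity part (though not for the sharp two-sided time bound) is to note that $\dot b_0=-f'(a_0)-q_0\geq c\,a_0-C\eta^5\geq c'\eta>0$ while $a_0\leq\delta$, so $b_0$ is strictly increasing there; you do not make this observation either.
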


\begin{proof}

$\bullet$ Firstly, we can go  back to the particular case $b_0(0)=0$ : for that, we prove that there exists a time $t\in]0,\eta^{-2}[$ such that $b_0(-t)=0$. We proceed by contradiction : suppose that $b_0(-t)>0$ for all $t\in[0,\frac{4b_0(0)}{m^2\eta}]$. We know from Lemma \ref{lem:bootstrap}, that for all those $t$, $J(-t)\leq \eta^5$. Thus $q_0(-t)\leq C\eta^5$, and by~\eqref{conf}, we have that  $a_0(-t)>\eta/2$. Hence,
$$\dot b_0(-t)=-f'(a_0(-t))-q_0(t)> -f'(\frac{\eta}{2})+O(\eta^5)=m^2\frac\eta2+O(\eta^3)>m^2 \frac\eta3.$$
 Then for all $t$ in $[0,\frac{4b_0(0)}{m^2\eta}]$,
 $$b_0(-t)<b_0(0)-t m^2\frac{\eta}{3}$$
holds. In particular, we obtain $b_{0}(-\frac{4b_0(0)}{m^2\eta})<0$ : the contradiction.

$\bullet$ So for all  initial conditions satisfying the assumption of the lemma, there exists a negative time\,$-t$ such that $b_0(-t)=0$ and $J(-t)\leq \eta^5$.

 In the following we only consider the case $$a_{0}(0)=\eta \quad \text{and} \quad b_0(0)=0.$$
 Observe that   the upper bound we will get for $\tau_\eta$ in this case will be an upper bound in the general case. 

\vspace{1ex}


$\bullet$ 
Let $\phi$ be the function defined by
\begin{equation*}
\eta^{5}\phi(t)=-2(J(t)-J(0))-2(r(t)-r(0)).
\end{equation*}
From the preservation of energy, see \eqref{HamiJ} and \eqref{heta}, we get that $\phi$ satisfies
\begin{equation}\label{phi}
b^{2}_{0}+f(a_{0})=f(\eta)+\eta^{5}\phi(t).
\end{equation}
Given that $b_0(0)=0$ and $\dot{b}_0(0)>0$ for $\eta$ sufficiently small,   $b_0(t)>0$ for some times, and  as long as $b_{0}(t)\geq 0$, $b_0(t)$ reads
\begin{equation}\label{eq:b0}
\dot{a}_0(t)=b_{0}(t)=\sqrt{f(\eta)-f(a_{0}(t))+\eta^{5}\phi(t)}.
\end{equation}
In particular, as long as $b_0(t)>0$, by integration and the change of variables $\a=a_{0}(t)$ we obtain
\begin{equation}\label{eq:t}
t=\int_{0}^{t}\frac{\dot a_{0}(s)\text{d}s}{\sqrt{f(\eta)-f(a_{0}(s))+\eta^{5}\phi(s)}}=\int_{\eta}^{a_0(t)}\frac{\text{d}\a}{\sqrt{f(\eta)-f(\a)+\eta^{5}\phi(a^{-1}_{0}(\a))}}.
\end{equation}

$\bullet$ Let us prove that, as long as $J(t)\leq \eta^{5}$
\begin{equation}\label{quad}
|\phi(t)|\leq C\min{(1,t^{2})}.
\end{equation}
Since $J\leq \eta^{5}$ and by \eqref{equiv} together with \eqref{b2}, we have $|\phi(t)|\leq C$. Then, since $b_{0}(0)=0$ and $\dot{a}_{0}(0)=b_{0}(0)=0$, from \eqref{phi} we infer that $\dot{\phi}(0)=0$. To obtain \eqref{quad}, it is therefore enough to prove that $|\ddot{\phi}|\leq C$. This is done by computing $\ddot{J}$ and $\ddot r$ with the relations \eqref{systn}, and with Cauchy-Schwarz  we can check that   $|\ddot J |  \leq C\eta^{5}$ and  $|\ddot r|\leq C\eta^{5}$.\\

$\bullet$ We now show that there exist $c,C>0$ so that as long $J(t)\leq \eta^{5}$, $a_0(t)\leq \delta$ and $b_0(t)\geq 0$,
\begin{equation}\label{ineq1}
c(a_0^{2}-\eta^{2})\leq f(\eta)-f(a_0)+\eta^{5}\phi(t)\leq C(a_0^{2}-\eta^{2}).
\end{equation}
Observe that for all $\eta\leq \alpha\leq\delta$, 
 $$(m^2-\delta^{2p})(\alpha^{2}-\eta^{2})\leq f(\eta)-f(\a)\leq m^2(\alpha^{2}-\eta^{2})$$
holds, and recall that $\delta< m^{\frac{1}{p}}$. It is then enough to show that $\eta^{5}\phi(t)$ is a negligible amount when $\eta$ small. Recall the form \eqref{syst0} of the system :
\begin{equation*}
\ddot{a}_{0} =\dot{b_0}=m^{2}a_0-a_0^{2p+1}-q_{0}=-f'(a_0)-q_0(t).
\end{equation*}
On one hand, with the same calculations as in the proof of Lemma \ref{Lem:I}, we obtain that as long as $J(t)\leq \eta^{5}$, we have $|q_0(t)|\leq \eta^{5}$. On the other hand, by a study of the variations of $f'$, we get that for $\eta\leq\alpha\leq\delta$, if $\eta$ small enough then $f'(\alpha)\leq -c\eta$. We deduce that as long as $J(t)\leq \eta^{5}$ and $a_0\leq\delta$, we have $\ddot a_{0}\geq c\eta$ and by Taylor, $\dis |a_{0}(t)-a_{0}(0)|\geq c\eta t^{2}$. This in turn implies that when moreover $a_0^{-1}$ is well-defined (for $b_{0}(t)>0$)
\begin{equation*}
|a^{-1}_{0}(\a)|=|a^{-1}_{0}(\a)-a^{-1}_{0}(\eta)|\leq \frac{C}{\eta^{1/2}}|\a-\eta|^{1/2} 
\end{equation*}
holds. Then by \eqref{quad}, we get that
\begin{equation*}
\eta^{5}\big|\phi(t)\big|\leq C\eta^{5}\eta^{-1}|a_0-\eta|\leq C \eta^{3}(a_0^{2}-\eta^{2}),
\end{equation*}
which proves \eqref{ineq1}.

$\bullet$ Firstly, from \eqref{ineq1} with \eqref{eq:b0}, we get that as long as $J(t)\leq \eta^{5}$ and $a_0(t)\leq\delta^2$, we have $b_0(t)>0$. 
Secondly, with \eqref{ineq1} together with \eqref{eq:t}, we can compute an estimate of $t$ in term of $a_0(t)$. We will see then that $a_0(t:=\tau_\eta)=\delta$ holds for a time $\tau_\eta$ smaller than $\eta^{-2}$, so that by Lemma \ref{lem:bootstrap} we are still in the regime $J(t)\leq \eta^{5}$. Precisely, observe that
$$\dis   \int_{\eta}^{\alpha_0}\frac{\text{d}\a}{\sqrt{\a^{2}-\eta^{2}}} =\ln\left(\frac{\alpha_0}{\eta}+\sqrt{\frac{\alpha_0^2}{\eta^2}-1}\right).$$
Then \eqref{ineq1} together with \eqref{eq:t} gives the estimate claimed in the lemma, and in particular $\tau_\eta\leq \eta^{-2}$ for $\eta$ sufficiently small. 
\end{proof}

\begin{lemm}\label{lem:T'}
For any initial condition $(a(0),b(0))$ such that $b_0(0)\geq 0$, there exists a time $T^1_\eta$ such that
$$b_0(T^1_\eta)=0, \quad \text{and}\quad b_0(t)>0 \text{ for all }t\in]0,T^1_\eta[.$$
Moreover, if $a_0(0)<\delta$, then there exist $C$ such that
$$0<T^1_\eta-\tau_\eta \leq C,$$
where $\tau_{\eta}$ is given by Lemma \ref{lem:tau} and if $a_0(0)\geq \delta$, then
$$T^1_\eta\leq C.$$
\end{lemm}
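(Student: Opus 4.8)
The plan is to show that once $a_0(0)\geq \delta$ (or once the orbit has reached the level $a_0=\delta$ with $b_0>0$, as guaranteed by Lemma \ref{lem:tau}), the solution completes the ``top half'' of a loop around $(m^{1/p},0)$ in bounded time, i.e.\ $b_0$ returns to zero after a uniformly bounded time. The mechanism is that in the region $a_0\in[\delta,\delta']$ the drift $-f'(a_0)-q_0$ is of order one (not small like it is near $a_0\sim\eta$), so the motion there is non-degenerate and takes $O(1)$ time; outside $[\delta,\delta']$ but still with $b_0>0$, the velocity $b_0=\sqrt{f(\eta)-f(a_0)+\eta^5\phi}$ is bounded below by a positive constant (by \eqref{ineq1}-type estimates, since $a_0^2-\eta^2$ is bounded below there), so $a_0$ traverses that region in $O(1)$ time as well. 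The only region where time accumulates is $a_0$ close to $\eta$, and that has already been absorbed into $\tau_\eta$ by Lemma \ref{lem:tau}.

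Concretely, I would first dispose of the case $a_0(0)<\delta$: by Lemma \ref{lem:tau} there is $\tau_\eta$ with $a_0(\tau_\eta)=\delta$, $b_0>0$ on $]0,\tau_\eta]$, and $\tau_\eta\leq C\ln(1/\eta)\leq \eta^{-2}$, so Lemma \ref{lem:bootstrap} keeps us in the regime $J\leq \eta^5$; hence it suffices to treat an initial condition with $a_0(0)=\delta$, $b_0(0)>0$ (or more precisely $a_0(0)\geq\delta$, $b_0(0)\geq 0$) and show $b_0$ vanishes again after bounded time, which gives $T^1_\eta-\tau_\eta\leq C$ resp.\ $T^1_\eta\leq C$. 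Using \eqref{phi}, as long as $b_0\geq 0$ we have $b_0=\sqrt{f(\eta)-f(a_0)+\eta^5\phi}$ with $|\phi|\leq C$; the quantity $f(\eta)-f(a_0)$ is positive precisely for $a_0\in(\eta,\delta')$ roughly, and is $\geq c$ on, say, $a_0\in[\delta/2,\delta'-c']$, so on that range $b_0\geq c>0$ and $a_0$ increases at unit speed, spending time $\leq C$ to cross from $\delta$ up towards $\delta'$. As $a_0$ approaches $\delta'$, where $f(\eta)-f(a_0)$ degenerates to order $\eta^2$, I instead track $b_0$ directly: $\dot b_0=-f'(a_0)-q_0$, and near $a_0=\delta'>m^{1/p}$ one has $f'(a_0)\geq c>0$, so $\dot b_0\leq -c/2<0$, forcing $b_0$ down to $0$ within an additional time $\leq C$ (and $b_0$ cannot stall, since while $b_0>0$ and $a_0$ is near $\delta'$ the derivative is bounded away from $0$). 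Concatenating: total time from $a_0=\delta$ until $b_0=0$ is $O(1)$, and since the whole excursion stays in $t\leq \tau_\eta + C\leq \eta^{-2}$, Lemma \ref{lem:bootstrap} applies throughout, so indeed $J\leq\eta^5$ is maintained and all the above estimates are legitimate. The case $a_0(0)\geq\delta$ with $b_0(0)\geq 0$ is handled the same way but with $\tau_\eta$ replaced by $0$.

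For existence of $T^1_\eta$ one argues: $b_0$ is positive for small $t>0$ (either by hypothesis $b_0(0)>0$, or if $b_0(0)=0$ then $\dot b_0(0)=-f'(a_0(0))-q_0(0)>0$ since $a_0(0)<m^{1/p}$ and $q_0$ is tiny), and cannot stay positive forever because, by the argument above, as soon as $a_0$ enters a neighborhood of $\delta'$ (which it must, since while $b_0\geq c>0$ in the middle range $a_0$ strictly increases) the derivative $\dot b_0$ becomes uniformly negative; hence $b_0$ hits $0$ at a first time $T^1_\eta$, and on $]0,T^1_\eta[$ it is positive by definition of the first zero.

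The main obstacle, and the place requiring the most care, is the transition near $a_0=\delta'$: there $b_0$ and $f(\eta)-f(a_0)$ are both small (of size $O(\eta)$ and $O(\eta^2)$), so the ``unit speed'' estimate for $a_0$ breaks down and one cannot bound the crossing time merely via $\int d\alpha/\sqrt{\alpha^2-\cdots}$, which would diverge logarithmically if $a_0$ overshot $\delta'$. One must therefore switch from controlling $a_0$ to controlling $b_0$ and exploit that $f'>0$ strictly past $m^{1/p}$ so that the motion is pushed back — i.e.\ the orbit turns around near $\delta'$ rather than lingering — and check that $q_0$ and $\eta^5\phi$ are genuinely negligible ($O(\eta^5)$) compared to the $O(1)$ restoring force there, so that the qualitative picture of a bounded-time half-loop is rigorous. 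Everything else is routine once the regime $J\leq\eta^5$ is secured on the relevant time interval via Lemma \ref{lem:bootstrap}.
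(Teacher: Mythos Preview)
Your proposal is correct and follows essentially the same route as the paper: reduce to the case $a_0\geq\delta$ via Lemma~\ref{lem:tau}, use that $b_0$ is bounded below (from the energy identity~\eqref{phi}) to cross the middle region in $O(1)$ time, and then use $\dot b_0=-f'(a_0)-q_0\leq -c<0$ once $a_0$ is past $m^{1/p}$ to force $b_0$ down to zero in $O(1)$ more time, all while Lemma~\ref{lem:bootstrap} keeps $J\leq\eta^5$. The paper packages the two stages as contradiction arguments (Step~(i): $b_0$ cannot stay above $b_0(\tau_\eta)$ too long; Step~(ii): $b_0$ cannot stay positive too long), but the content is the same as your direct estimates.

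One small geometric slip to fix: you write that $f(\eta)-f(a_0)$ ``degenerates to order $\eta^2$'' near $a_0=\delta'$ and that the turnaround is near $\delta'$. In fact $f(\eta)-f(\delta')=f(\eta)-f(\delta)$ is of order one; the point where $f(\eta)-f(a_0)$ vanishes (and hence where $b_0\to 0$) is near $h(0)=m^{1/p}(p+1)^{1/(2p)}>\delta'$, as the paper notes. This does not affect your argument, since your switch to tracking $\dot b_0$ can occur anywhere past a fixed threshold $>m^{1/p}$ (e.g.\ at $\delta'$), where $f'(a_0)\geq f'(\delta')>0$ uniformly, and $a_0$ remains in that region (it is increasing while $b_0>0$ and bounded above by Lemma~\ref{lemC2}).
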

\begin{proof}
Recall that $\delta\in]0,m^{\frac1p}[$, and $\delta'$ is the unique real of $]m^{\frac1p},(p+1)^{\frac1{2p}}m^{\frac1p}[$ such that $f(\delta')=f(\delta)$. In the proof we consider the case $a_0(0)<\delta$. To prove the lemma, we proceed in two steps, here is a summary of them :
\begin{enumerate}[(i)]
\item We show that there exists a time $\tau^1_\eta>\tau_\eta$ such that $b_0(\tau^1_\eta)=b_0(\tau_\eta)$, $a_0(\tau^1_\eta)-\delta'\leq C\eta^{5}$ and $b_0(t)>b_0(\tau_\eta)$ for $t\in]\tau_\eta,\tau^1_\eta[$. Moreover, $\tau^1_\eta-\tau_\eta\leq C$ holds.
\item Then we show the existence of $T^1_\eta>\tau^1_\eta$ such that $b_0(T^1_\eta)=0$, $|a_0(T^1_\eta)-h(0)|\leq C\eta^2$, and $b_0(t)>0$ for all $t\in]\tau^1_\eta,T^1_\eta[$. Moreover, $T^1_\eta-\tau^1_\eta\leq C$ holds. 
 \end{enumerate}
 If $a_0(0)\in]\delta,\delta'[$, then in step (i) the proof is the same by considering $a_0(0)$ instead of $a_0(\tau_\eta)$. If $a_0(0)>\delta'$, then step (ii) is sufficient, considering here $a_0(0)$ instead of $a_0(\tau^1_\eta)$.

\vspace{1ex}

 {\bf Step (i). } From Lemma \ref{lem:tau} we know that $a_0(\tau_\eta)=\delta$. We then obtain that $b'_0(\tau_\eta), b_0(\tau_\eta)\geq c_0>0$ ($c_0$ depends only on $\delta$ which is fixed). In particular, $b_0(t)\geq b_0(\tau_\eta)$ for small times  $t\geq \tau_\eta$. Let us prove this step, by contradiction : suppose that for all $t $ in $]\tau_\eta, \tau_\eta+(h(0)-\delta)/c_0]$, we have $b_0(t)>b_0(\tau_\eta)$.  Then for all those $t$, 
$$a_0(t)>\delta+c_0\cdot(t-\tau_\eta).$$
Then we have $a_0(\tau_\eta+(h(0)-\delta)/c_0)>h(0)$, and thus there exists $t\in ]\tau_\eta, \tau_\eta+(h(0)-\delta)/c_0]$ such that $f(a_0(t))=f(h(0))=0$. Hence, for this $t$, the assumption $b_0(t)>b_0(\tau_\eta)$ is in contradiction with the energy preservation \eqref{phi} for $\eta$ sufficiently small, given that $t$ is in the regime $|\phi(t)|\leq C$. Finally, there exists $\tau^1_\eta$ such that $0<\tau^1_\eta-\tau_\eta\leq C$ and $b_0(\tau^1_\eta)=b_0(\tau_\eta)$.   

To get the approximation of $a_0(\tau^1_\eta)$, on the one hand observe that necessarily there exists a time $t\in ]\tau_\eta,\tau^1_\eta[$ such that $b'_0(t)=0$. From \eqref{syst0}  we thus have 
$|f'(a_0(t))|\leq C\eta^{5}$, which means that $|a_0(t)-m^{\frac1p}|\leq C'\eta^{5}$. On the other hand, from the preservation of the energy \eqref{phi}, given that we are still in the regime $|\phi(t)|\leq C$, we obtain that $|f(a_0(\tau^1_\eta))-f(\delta)|\leq C\eta^{5}$. Then, given that $a_0(\tau^1_\eta)>a_0(t)$, we get that for $\eta$ sufficiently small, $|a_0(\tau^1_\eta)-\delta'|\leq C\eta^{5}$.
\vspace{1ex}

{\bf Step (ii). } Let us prove this step by contradiction : suppose that for all $t $ in $]\tau^1_\eta, \tau^1_\eta+2 b_0(\tau_\eta)/f'(\delta)]$, we have $b_0(t)>0$.  Then for all those $t$, $a_0(t)>a_0(\tau^1_\eta)$ and thus
$$b'_0(t)<-f'(\delta')+O(\eta^{5})<\frac12 f'(\delta) \quad \text{for }\eta\text{ small}.$$
Then we have $b_0(\tau^1_\eta+2 b_0(\tau_\eta)/f'(\delta))<0$, and get the contradiction. Thus there exists $T^1_\eta$ such that $b_0(T^1_\eta)=0$ and $0<T^1_\eta-\tau^1_\eta\leq C$. From the energy preservation \eqref{phi} and given that $a_0(T^1_\eta)>a_0(\tau^1_\eta)>m^{\frac1p}$ and $f'(h(0))>0$, we get the estimate on $a_0(T^1_\eta)$.
\end{proof}

\begin{lemm}\label{LemT}
There exist some constants $c_0,C_0,C>0$ such that for any initial condition $(a(0),b(0))$ satisfying 
$$dist\big((a_0(0),b_0(0)),\K_\eta)\big)\leq \eta^3, \quad J(0)\leq \eta^5,$$
there exists a time $T_\eta$ such that
$$|(a_0(T_\eta),b_0(T_\eta))-(a_0(0),b_0(0))|\leq C\eta^2, \quad J(T_\eta)\leq 2KJ(0)\leq \eta^5$$
and 
$$c_0\ln \frac1{\eta}\leq T_{\eta} \leq C_0\ln \frac1{\eta}.$$
Precisely, we can define uniquely $T_\eta$ as the time of first return to the set 
$$\big\{(a,b) \;| \; a_0=a_0(0), \;sgn(b_0)=sgn(b_0(0))\big\}, \quad \text{if }\; a_0(0)\in[\delta,\delta']$$ 
or
$$\big\{(a,b)\; |\; b_0=b_0(0),\; sgn (a_0-m^{\frac1p})=sgn(a_0(0)-m^{\frac1p})\big\}, \quad \text{if } \;a_0(0)\notin[\delta,\delta'].$$
\end{lemm}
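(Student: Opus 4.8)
The plan is to assemble Lemma~\ref{LemT} from the building blocks already in place, namely Lemma~\ref{lem:bootstrap}, Lemma~\ref{lem:tau} and Lemma~\ref{lem:T'}, by carefully piecing together the half-loop where $b_0\geq 0$ with the mirror half-loop where $b_0\leq 0$, and then translating the resulting conclusion on $(a_0,b_0)$ into a distance estimate to $\K_\eta$ via Lemma~\ref{recip}. First I would reduce to the case $b_0(0)\geq 0$: if $b_0(0)<0$ one runs the argument backwards in time, or equivalently applies the symmetry $(a_0,b_0,t)\mapsto(a_0,-b_0,-t)$ of the system \eqref{syst0} (note $f$ is even, so $f'$ is odd), which exchanges the two sets in the statement; so it suffices to treat $b_0(0)\geq 0$ and then concatenate. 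Next, using Lemma~\ref{lem:T'}, I would produce a first time $T^1_\eta$ with $b_0(T^1_\eta)=0$, $b_0>0$ on $]0,T^1_\eta[$, and (by the various cases in Lemma~\ref{lem:tau} and Lemma~\ref{lem:T'}) the two-sided bound $c\ln\frac1\eta\leq T^1_\eta\leq C\ln\frac1\eta$ together with $|a_0(T^1_\eta)-h(0)|\leq C\eta^2$ when $a_0(0)<\delta$, and $a_0(T^1_\eta)$ is $O(\eta^2)$-close to the corresponding turning point of $\K_\eta$ in general. The key quantitative input is that throughout $[0,T^1_\eta]$ we stay in the regime $J(t)\leq \eta^5$: this is exactly Lemma~\ref{lem:bootstrap} once we know $T^1_\eta\leq\eta^{-2}$, which the logarithmic upper bound guarantees for $\eta$ small.

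Then I would repeat the same half-loop analysis on $[T^1_\eta, T_\eta]$, now with $b_0\leq 0$: apply the time-reversal symmetry to transfer Lemmas~\ref{lem:tau}--\ref{lem:T'} to the descending branch, obtaining a further time during which $b_0<0$, of length again between $c\ln\frac1\eta$ and $C\ln\frac1\eta$, after which $(a_0,b_0)$ returns to the slice $\{a_0=a_0(0)\}$ (if $a_0(0)\in[\delta,\delta']$) or $\{b_0=b_0(0)\}$ (if $a_0(0)\notin[\delta,\delta']$) with matching sign — this is the geometric reason the return set is defined the way it is: in the region $a_0\in[\delta,\delta']$ the flow is transverse to the vertical lines $a_0=\text{const}$, while outside that region it is transverse to the horizontal lines $b_0=\text{const}$, so in each case first return is well defined. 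Adding the two half-loop times gives $c_0\ln\frac1\eta\leq T_\eta\leq C_0\ln\frac1\eta$ with new constants. Along the whole interval $[0,T_\eta]$, since we never leave $b_0\geq 0$ for longer than one controlled stretch, Lemma~\ref{lem:bootstrap} applied successively (on $[0,T^1_\eta]$, then on the reversed interval) yields $J(T_\eta)\leq 2KJ(0)\leq\eta^5$; one should be slightly careful to note that the constant $K$ from the bootstrap is applied at most twice, hence the factor $2K$ in the statement (or, more simply, re-run the Gronwall estimate \eqref{Ipoint} on the full interval using that $a_0$ is bounded by $C_1(m)$ throughout, so that the total exponential factor is still a fixed constant).

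Finally, to get $|(a_0(T_\eta),b_0(T_\eta))-(a_0(0),b_0(0))|\leq C\eta^2$ I would invoke Lemma~\ref{recip}: at time $T_\eta$ we have $H(a(T_\eta),b(T_\eta))=H(a(0),b(0))=-\tfrac12 m^2\eta^2+O(\eta^3)$ by conservation of energy and Lemma~\ref{lem:energy}, we have $J(T_\eta)\leq\eta^5\leq\eta^3$, and $a_0(T_\eta)\geq\eta$ (this lower bound comes from \eqref{conf} in Lemma~\ref{lemC2}, valid as long as $J\leq\eta^5$), so Lemma~\ref{recip} gives a point $(\tilde a_0,\tilde b_0,0,\dots)\in\K_\eta$ with $|(a_0(T_\eta),b_0(T_\eta))-(\tilde a_0,\tilde b_0)|\leq C\eta^2$; the same lemma applied at time $0$ compares $(a_0(0),b_0(0))$ to a point of $\K_\eta$ within $C\eta^2$ (indeed within $\eta^3$ by hypothesis). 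It then remains to check that these two points of $\K_\eta$ coincide — which is precisely where the definition of $T_\eta$ as a first return to the correct slice with the correct sign is used: on $\K_\eta$, the slice $\{a_0=a_0(0),\,\mathrm{sgn}\,b_0=\mathrm{sgn}\,b_0(0)\}$ (resp. $\{b_0=b_0(0),\,\mathrm{sgn}(a_0-m^{1/p})=\cdots\}$) meets $\K_\eta$ in exactly one point, so $(\tilde a_0,\tilde b_0)$ and the initial comparison point are the same, and the triangle inequality closes the estimate. I expect the main obstacle to be bookkeeping rather than any new idea: one must keep the logarithmic time bounds, the sign conditions on $b_0$, and the regime $J\leq\eta^5$ consistent across the two half-loops and across the several case distinctions ($a_0(0)<\delta$, $a_0(0)\in[\delta,\delta']$, $a_0(0)>\delta'$), and verify that the first-return time is genuinely well defined and lands in the claimed slice — the transversality remark above is the crux of that verification.
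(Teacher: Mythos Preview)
Your proposal is correct and follows essentially the same approach as the paper, which gives only a brief sketch: piece together Lemmas~\ref{lem:tau} and~\ref{lem:T'} to track the half-loops near and away from the origin, maintain $J\leq\eta^5$ via Lemma~\ref{lem:bootstrap}, and conclude the $\eta^2$-closeness via Lemma~\ref{recip} combined with the definition of $T_\eta$ as a first return. Two minor slips worth tightening: Lemma~\ref{lemC2} gives $a_0\geq\eta/2$, not $a_0\geq\eta$ (harmless for Lemma~\ref{recip} up to constants), and ``$K$ applied twice'' would give $K^2$ rather than $2K$ --- your alternative of rerunning the Gronwall over the full loop (where the total variation of $a_0^{2p}$ is bounded) is the clean way to get a single fixed constant.
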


\begin{proof}
Let us explain the strategy of the proof. We follow the periodic orbit with initial $(a(0),b(0))$. When we are $\delta-$close to 0 we use the arguments of   Lemma  \ref{lem:tau}, and when $a_{0}\geq \delta$, we use the arguments of Lemma \ref{lem:T'}.




Notice that by Lemma \ref{recip} and the precise definition of $T_{\eta}$  we know that $(a_{0}(0),b_{0}(0))$ and  $(a_{0}(T_{\eta}),b_{0}(T_{\eta}))$ are $\eta^{2}$ close to each other. 
 \end{proof}

\subsection{Many loops : proof of Theorem \ref{thm4} $(ii)$}

We would like to take $(a(T_\eta),b(T_\eta))$ as an initial condition in Lemma \ref{LemT} to iterate the process. But by Lemmas \ref{LemT} and \ref{recip}, we only know that  
$$dist\big((a_0(T_\eta),b_0(T_\eta)),\K_\eta\big)\leq\eta^2, \quad J(T_\eta)\leq K\eta^6.$$ 
This is a priori too weak to apply Lemma \ref{LemT}. But we can choose to apply the Lemma \ref{LemT} with $\eta'$ such that $(a_0(T_\eta),b_0(T_\eta))\in\K_{\eta'}$, which gives the same estimates.

We see then that we can iterate the argument as long as  $J\leq   \eta^{5}$, namely $N$ times (each time corresponds to a loop) with 
$$(2 K)^{N}\eta^6= \eta^{5}\quad \text{i.e. } N=c\ln \frac1{\eta}.$$ This gives that  for $|t|\leq c\big(\ln\frac1{\eta}\big)^{2}$, $J\leq \eta^{5}$ and we can iteratively apply Lemma \ref{LemT}.

\end{document}